\newtheorem{thm}{Theorem}[section]
\newtheorem{cor}{Corollary}[section]
\newtheorem{lem}{Lemma}[section]
\newtheorem{defn}{Definition}[section]
\newtheorem{rem}{Remark}[section]
\numberwithin{equation}{section}
\begin{document}
\title{Rotation domains and Stable Baker omitted value }

\author[1]{Subhasis Ghora\footnote{sg36@iitbbs.ac.in} }
\author[1]{Tarakanta Nayak\footnote{tnayak@iitbbs.ac.in(Corresponding author )} }
\affil[1]{\textit{School of Basic Sciences\hspace{8cm}Indian Institute of Technology Bhubaneswar, India}  }
\date{}
\maketitle
\begin{abstract}
	 A Baker omitted value, in short \textit{bov} of a transcendental meromorphic function $f$ is an omitted value  such that there is a disk $D$ centered at the bov for which each component of the boundary of $f^{-1}(D)$ is bounded. 
Assuming all the  iterates $f^n$ are analytic  in a neighborhood of its bov, this article proves that the number of Herman rings of a particular period is finite and every Julia component intersects the boundaries of at most finitely many Herman rings. Further, if the bov is the only limit point of the critical values then it is shown that $f$ has infinitely many repelling fixed points. If a repelling periodic point of period $p$ is on the boundary of a $p$-periodic rotation  domain then the periodic point is shown to be on the boundary of infinitely many Fatou components. Under additional assumptions on the critical points, a sufficient condition is found for a Julia component to be singleton. As a consequence, it is proved that if the boundary of a wandering domain $W$ accumulates at some point of the plane under the iteration of $f$ then each limit of $f^n$ on $W$ is either a parabolic periodic point or in the $\omega$-limit set of recurrent critical points. Using the same ideas, the boundary of rotation  domains are shown to be in the $\omega$-limit set of recurrent critical points. 
\end{abstract}
\textit{Keyword:}
Baker omitted value, Recurrent critical point, Rotation  domain and Wandering domain.\\
Mathematics Subject Classification(2010) 37F10,  37F45
\section{Introduction}
A function $f:\mathbb{C} \rightarrow \widehat{\mathbb{C}}$ with exactly one essential singularity, chosen to be at $\infty$ is called  general transcendental meromorphic if it has  either at least two poles or one pole which is not an omitted value (i.e., there is at least one pre-image of the pole).
The Fatou set of $f$ (also called the stable set) denoted by $\mathcal{F}(f)$ is defined as the set of all points at a neighborhood of which the sequence of functions $\{f^n\}_{n \geq 0}$ is defined and normal in the sense of Montel. The complement of the Fatou set is called the Julia set. It is denoted by $\mathcal{J}(f)$.  By the definition of the Julia set, $\infty \in \mathcal{J}(f)$. It is well known that $\{f^n\}_{n>0}$ is normal in a neighborhood of a point whenever $f^n$ is  defined  and analytic  for all $n$ in the neighborhood~\cite{ber93}.

    A point $b\in  \mathbb{C}$ is called an omitted value of $f$ if $f(z)\neq b$ for any $z\in\mathbb{C}$. A special type of omitted value, introduced by Chakra et al. in ~\cite{bov} is the concern of this article. 
 
 \begin{defn}
 A Baker omitted value  $b \in  \mathbb{C}$ of a meromorphic function $f$ is an omitted value of $f$ such that there is a disk $D$ with center at $b$ for which each component of the boundary of $f^{-1}(D)$ is bounded.
 \end{defn} 

 The set of singular values is the closure of the union of the critical values and the asymptotic values. A critical value is the image of a critical point. On the other hand, a point $a\in \widehat{\mathbb{C}}$ is called an asymptotic value of $f$ if there exists a curve $\eta:[0,\infty)\rightarrow \mathbb{C}$ with $\lim_{t\rightarrow \infty} \eta(t)=\infty$ such that $ \lim_{t\rightarrow \infty} f(\eta(t))=a$.  An omitted value is an asymptotic value. 
 The dynamics  (the Fatou and the Julia set) of meromorphic functions, for which the set of singular values is finite or bounded  has been investigated extensively. Some references can be found in~\cite{ber93}. In these studies the nature of the whole set of singular values (finite or bounded) is of primary importance.  Instead of the whole set of singular values, one may look at few singular values of a particular property. How a specific type of singular value influences the dynamics of a function can be a different way to look at the subject. Omitted values are a special type of asymptotic value in the sense that every singularity lying over it is direct. Details on the classification of singularities can be found in ~\cite{berg-ermk}. These are known to control a number of aspects of the dynamics of a function, as reported in ~\cite{tk4,tkzheng,Nayak2016}. A Baker omitted value is always a limit point of critical values (See Lemma~\ref{critically infinite} in the next section). This  gives that the functions with the bov has infinitely many singular values. It is not known whether such a function can have an unbounded set of singular values. The well studied functions $z \to \lambda e^z$ and $z \to \lambda \tan z$ have omitted values. In each of these cases, the pre-image of a sufficiently small neighborhood of an omitted value is connected and simply connected. This fact has been crucial in the investigation in many different ways. But this is not the case for a Baker omitted value. In fact, the pre-image of a sufficiently small neighborhood of a bov is an infinititely connected domain (See Lemma~\ref{singleton}(1)). Thus, the investigation of dynamics of functions with bov is a new direction in transcendental dynamics. This has been initiated in ~\cite{bov}.

 \begin{defn}
 	 The bov of a meromophic function is called stable if the sequence of its iterates is defined in a neighborhood of the bov.  
   \end{defn}
As remarked earlier, the stable bov is in the Fatou set of the function.
 
  Let $\mathcal{M}_S$ denote the class of all general transcendental meromorphic functions with stable bov. 
 
 A  Fatou component is a maximal connected subset of the Fatou set. It is very important to note that  for all functions in $\mathcal{M}_S$, all but one Fatou component are bounded (See Lemma 2.4 (2)~\cite{bov-1} or  Lemma~\ref{singleton} in the next section). This fact has been crucial in ~\cite{bov-1} to determine the connectivity of all the Fatou components.
\par
A Fatou component $V$ is called $p-$periodic if $V_p\subseteq V$ where $V_k$ denotes the Fatou component containing $f^k(V)$ for $k \geq 0$ where $V_0$ is taken as $V$. A periodic Fatou component can be an attracting domain, a parabolic domain, a Baker domain or a rotation  domain (a Herman ring or a Siegel disk). Rotation  domains are special in the sense that  $f^p$ is conformally conjugate to an irrational rotation of an annulus (Herman ring) or the unit disk (Siegel disk) on a $p$-periodic rotation domain. In fact, a $p$-periodic Fatou component $V$ is a Herman ring (or a Siegel disk) if there exists a conformal map $\phi: V \to \{z: 1<|z|<r\}$ (or $\phi: V \to \{z:  |z|<1\}$ respectively) such that $\phi ( f^p( \phi^{-1}))(z)=e^{2 \pi i \theta}z$ for some irrational $\theta$. A rotation domain is an uncountable union of disjoint Jordan curves each of which is invariant under $f^p$. These curves are in deed the pre-images of the concentric circles centered at the origin under $\phi$.  More details  can be found in \cite{ber93}.

\par Meromophic functions with finitely many singular values cannot have infinitely many Herman rings. This is shown by Zheng ~\cite{zheng-2000} who also proved the existence of a function with infinitely many Herman rings. Though a function in $\mathcal{M}_S$ has infinitely many singular values, we have proved that there cannot be infinitely many Herman rings of a particular period. In view of the known result that the period of a Herman ring of a meromorphic function with an omitted value is larger than  two~\cite{Nayak2016}, we consider   Herman rings of period at least three.
\begin{thm}\label{pperiodicHR}
	Let  $f \in \mathcal{M}_S$ and $p \geq 3$. 
	Then the number of $p-$periodic Herman rings is finite.
\end{thm}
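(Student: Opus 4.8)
The plan is to argue by contradiction: assume $f$ has infinitely many $p$-periodic Herman rings. Since all but one Fatou component of a function in $\mathcal{M}_S$ are bounded, at most one of these rings can be unbounded, so I may fix an infinite family $\{H_k\}_{k\ge 1}$ of pairwise disjoint \emph{bounded} Herman rings, each of exact period $p$. On each $H_k$ the map $f^p$ is conformally conjugate to an irrational rotation, so $H_k$ is foliated by $f^p$-invariant Jordan curves; choose one such curve $\gamma_k\subseteq H_k$ and let $\Delta_k$ be the bounded Jordan domain it bounds, so that $\partial\Delta_k=\gamma_k\subseteq\mathcal{F}(f)$.

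The first key step is a purely local lemma: each $\Delta_k$ must contain a pole of $f^p$, that is, a point $z$ with $f^{j}(z)=\infty$ for some $1\le j\le p$. Indeed, if $f^p$ were holomorphic on $\overline{\Delta_k}$, then, since $f^p|_{\gamma_k}$ is conjugate to an irrational rotation, it would map $\gamma_k$ homeomorphically (orientation-preservingly) onto itself, and the argument principle would force $f^p:\Delta_k\to\Delta_k$ to be a proper map of degree one, hence a conformal automorphism conjugate to an irrational rotation. But then $\{f^{pn}\}$ is normal on $\Delta_k$, so $\Delta_k\subseteq\mathcal{F}(f)$; as $\Delta_k$ contains $\gamma_k\subseteq H_k$, it would lie in the single Fatou component $H_k$, contradicting $\Delta_k\cap H_k=\emptyset$. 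Consequently each $\Delta_k$ carries a point $z_k$ with $f^{j_k}(z_k)=\infty$ and $1\le j_k\le p$; passing to a subsequence I may take $j_k=j$ constant. Here the finiteness of $p$ (the periods $1$ and $2$ being already excluded for omitted-value functions) is used decisively, since only the finitely many maps $f,f^2,\dots,f^p$ are involved.

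Next I would localise the rings near the bov. Using that the disk $D$ about the bov lies in $\mathcal{F}(f)$ and that $f^{-1}(D)$ is infinitely connected with all its boundary components bounded, I would show that each $H_k$ separates the bov $b$ from $\infty$, i.e. $b\in\Delta_k$. Granting this, the domains $\Delta_k$ are nested Jordan domains all containing the fixed finite point $b$, so after a further subsequence they are monotone. In the decreasing case $\bigcap_k\Delta_k$ is a bounded continuum containing $b$, the prepoles $z_k$ accumulate at some finite point $\zeta$, and hence the poles of $f^{j}$ accumulate at $\zeta$; since the only non-isolated singularities of $f^{j}$ are $\infty$ together with the prepoles of $\infty$ of order $<j$, the point $\zeta$ must be such a prepole, forcing $f^{i}(\zeta)=\infty$ for some $i<j$. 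Tracing $\zeta$ back to $b$ through the localisation and invoking the stability of the bov, whose orbit never meets $\infty$, yields the desired contradiction. The increasing case, in which the rings would march out toward $\infty$, is to be excluded using the uniqueness of the unbounded Fatou component and the bounded-boundary structure of $f^{-1}(D)$ near $\infty$.

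The main obstacle is the localisation step: proving that every $p$-periodic Herman ring encloses the bov (or, more generally, that the rings cluster at a point whose forward orbit is forbidden to reach $\infty$). This is precisely where the defining property of a Baker omitted value — the infinite connectivity of $f^{-1}(D)$ and the boundedness of its boundary components — must be converted into control on the position of the invariant curves $\gamma_k$. A secondary difficulty is ensuring that the accumulation point $\zeta$ of the prepoles is genuinely incompatible with the orbit of $b$ rather than merely an admissible prepole; handling the degenerate possibility that $\bigcap_k\Delta_k$ is larger than a single point, and cleanly ruling out the outward escaping configuration, are the places where the argument must be made airtight.
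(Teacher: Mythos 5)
Your opening move matches the paper's: normality of $\{f^{np}\}$ forces a prepole of order at most $p$ inside each invariant curve $\gamma_k$, and an accumulation point $\zeta$ of infinitely many poles of a fixed iterate $f^{j}$ must itself satisfy $f^{i}(\zeta)=\infty$ for some $i<j$. But the proposal has two genuine gaps, both of which you partly flag yourself. First, the localisation step is not just unproven; as stated it need not hold: there is no reason why \emph{every} $p$-periodic Herman ring should surround the bov. What the paper establishes, via the machinery of basic chains (Lemmas~\ref{basicchain} and~\ref{finiteBis}), is that every ring of every $p$-cycle surrounds one of the finitely many Fatou components $U_1=U,U_2,\dots,U_l$ with $l\leq p-1$ the maximal basic-chain length; only the \emph{innermost} ring of each cycle is guaranteed to surround $U$. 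This yields the structural fact your argument lacks: a single compact set $K$ (a union of bounded complementary components of $f^{-1}(U)$) containing \emph{all} $p$-periodic Herman rings. To get your nesting argument started you must restrict to one ring per cycle, the innermost one with respect to the bov, which does surround $b$; finiteness of the number of cycles then gives finiteness of the number of rings.

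Second, and more seriously, your endgame does not close. In the decreasing case you obtain a prepole $\zeta\in\bigcap_k\overline{\Delta_k}$ and propose to ``trace $\zeta$ back to $b$'' using stability of the bov. But $\bigcap_k\overline{\Delta_k}$ is a continuum containing $\overline{U}$ and plenty of Julia set; prepoles are dense in the Julia set, so the mere existence of a prepole in that intersection contradicts nothing about the stability of $b$. The paper's contradiction is of a different nature: since $f^{i}(\zeta)=\infty$ and the prepoles $w_n$, together with nearby points $z_n$ of the innermost rings, converge to $\zeta$, one gets $f^{i}(z_n)\to\infty$, so the $p$-periodic Herman rings $f^{i}(H_1^{n})$ form an unbounded family --- and \emph{this} contradicts their containment in the compact set $K$ from the first step. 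So the compactness statement you are missing is not a side issue; it is exactly what the final contradiction consumes. Two smaller points: no Herman ring of $f\in\mathcal{M}_S$ is unbounded, since the unique unbounded Fatou component is infinitely connected; and in your local lemma the correct contradiction is that $\overline{\Delta_k}$ contains the inner boundary of $H_k$, which lies in the Julia set --- $\Delta_k\cap H_k$ is certainly nonempty, so that is not the clash to invoke.
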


Maximally connected subsets of the Julia set  are referred as Julia components. Each component of the  boundary of a Herman ring is always contained in a Julia component. But it is non-trivial to decide whether there can be infinitely many Herman rings sharing their boundaries with a common Julia component. The following result is an answer to this for all the functions with stable bov.

\begin{thm}\label{commonboundarypoint}
	Let $f \in \mathcal{M}_S$ and $J$ be a Julia component of $f$. Then the number of Herman rings whose boundary components are contained in $J$ is finite.

\end{thm}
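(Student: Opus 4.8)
The plan is to argue by contradiction and to reduce the statement to a bound on periods, so that Theorem~\ref{pperiodicHR} can be applied. Suppose $J$ contains a boundary component of infinitely many Herman rings $H_1,H_2,\dots$, and let $p_n$ denote the period of $H_n$. Since for each fixed period there are only finitely many Herman rings by Theorem~\ref{pperiodicHR}, after passing to a subsequence we may assume $p_1<p_2<\cdots$. The first step is an invariance observation: if $\gamma\subseteq J$ is a boundary component of a $p$-periodic Herman ring $H$, then the model $\phi\circ f^{p}\circ\phi^{-1}(z)=e^{2\pi i\theta}z$ on the annulus shows that $f^{p}$ preserves each of the two boundary components of $H$, so $f^{p}(\gamma)\subseteq\gamma\subseteq J$. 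As $f^{p}(J)$ is a connected subset of the Julia set meeting $J$, it lies in a single Julia component, which must be $J$; hence $f^{p}(J)\subseteq J$. Consequently $J$ is a periodic Julia component, of some period $\pi$ that divides every $p_n$. Writing $g=f^{\pi}$, we have $g(J)\subseteq J$, and the $g$-orbit of each $\gamma_n$ is a single cycle consisting of exactly $m_n=p_n/\pi$ distinct boundary components, all contained in $J$ because $g(J)\subseteq J$.

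Next I would analyse how these boundary components sit inside the connected set $J$. For any Herman ring $H$, the set $\widehat{\mathbb{C}}\setminus H$ has exactly two components, and since $J$ is connected and disjoint from $H$ it lies in the closure of exactly one of them; thus the boundary component of $H$ lying in $J$ is the one adjacent to the side containing $J$, and the opposite complementary region $R(H)$ is cut off from $J$. Using that all but one Fatou component of $f\in\mathcal{M}_S$ are bounded, together with the infinite connectivity of the preimage of a neighbourhood of the bov (Lemma~\ref{singleton}), I would show that the regions $R(H_n)$ share a common point fixed by a suitable iterate, so that the curves $\gamma_n$ are \emph{nested}: they are linearly ordered by inclusion of the regions they cut off. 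This produces a $g$-invariant linear order on the chain $\Gamma:=\{\gamma_n\}$ of boundary components lying in $J$.

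Finally, because $g$ is analytic and hence orientation preserving, it respects this nesting order up to a single global reversal: if $R(H)\subset R(H')$ then $g$ carries the first region into the image of the second, so the induced action of $g$ on the linearly ordered set $\Gamma$ is either order preserving or order reversing. On the $g$-orbit of $\gamma_n$, which is a single cycle of length $m_n$ inside the finite chain $\Gamma$, an order-preserving cyclic permutation must be the identity, while an order-reversing one forces the cycle length to be at most two; hence $m_n\le 2$ and $p_n\le 2\pi$. This contradicts $p_n\to\infty$. Therefore every Herman ring with a boundary component in $J$ has period at most $2\pi$, and by Theorem~\ref{pperiodicHR} there are only finitely many of them.

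The main obstacle is the middle step: rigorously establishing that the rings with boundary in $J$ enclose a common periodic point and are therefore nested, and that $g$ acts monotonically on the resulting order. This is precisely where the hypothesis $f\in\mathcal{M}_S$ must enter, through the uniqueness of the unbounded Fatou component and the structure of $f^{-1}$ near the bov; care is needed because the essential singularity at $\infty$ forbids using $\infty$ itself as the common center, so one must instead exhibit a genuine (pre)periodic point enclosed by every such ring.
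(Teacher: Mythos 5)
There is a genuine gap, and it sits exactly where you flag ``the main obstacle'': the nesting claim is not merely unproven, it is false, and the true geometry is the opposite of what your argument needs. Suppose $\gamma_1,\gamma_2\subseteq J$ are boundary components of distinct Herman rings $H_1,H_2$. The set $\widehat{\mathbb{C}}\setminus H_1$ has exactly two components, both closed and disjoint; since $J$ is connected and disjoint from $H_1$, it is contained in exactly one of them, namely the one whose boundary is $\gamma_1$. Hence the other complementary component of $H_1$ (its ``hole'') contains no point of $J$, so it cannot contain $\gamma_2$, and therefore cannot contain $H_2$ either (as $H_2$ is connected, disjoint from $H_1$, and $\gamma_2\subseteq\overline{H_2}$). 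Running the same argument symmetrically, the two rings are never nested, and a short continuation shows their holes are in fact \emph{disjoint}: they surround no common point whatsoever. So the regions $R(H_n)$ admit no common periodic point and no linear order, and the monotone-permutation argument that was supposed to yield $m_n\le 2$ has nothing to act on; the bound $p_n\le 2\pi$ is never obtained and Theorem~\ref{pperiodicHR} cannot be invoked. A secondary problem: from $f^{p_n}(J)\subseteq J$ for all $n$ you cannot conclude that a minimal period $\pi$ of $J$ divides every $p_n$; the set $\{m: f^m(J)\subseteq J\}$ is only closed under addition, so the divisibility needs a separate justification.

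The paper's proof runs in the opposite direction and exploits precisely this non-nesting. It first records two observations: (i) at most one of the boundary components contained in $J$ can be an inner boundary, so all but possibly one are outer boundaries; and (ii) two Herman rings whose outer boundaries lie in $J$ cannot surround a common pole (the rigorous form of the disjoint-holes fact above). Assuming infinitely many rings $H^n$ with outer boundary in $J$, boundedness of $J$ (Lemma~\ref{singleton}(5)) leaves only finitely many poles available to be surrounded, so by (ii) one may assume no $H^n$ surrounds a pole; Lemma~\ref{general1} then produces $k_n$ with $f^{k_n}(H^n)$ surrounding a pole, and these pushed-forward outer boundaries land in the finitely many bounded Julia components $J, f(J),\dots,f^{p-1}(J)$, forcing two of the image rings to surround the same pole --- contradicting (ii). Your opening observation that $f^{p}(J)\subseteq J$ is correct (modulo the point that the conjugacy to a rotation lives on the open annulus, so one should argue via the invariant Jordan curves accumulating on $\partial H$, as the paper does) and is indeed used, but the contradiction must come from pole counting, not from bounding the periods.
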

\begin{cor}
	Let $f \in \mathcal{M}_S$. If the bov is the only limit point of the critical values, then the number of Herman rings whose boundary intersects the forward orbit of a critical value is finite.
\end{cor}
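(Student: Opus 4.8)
The plan is to combine the stability of the bov with the hypothesis that the critical values accumulate only at $b$, so as to reduce the problem to finitely many critical values, and then invoke Theorem~\ref{commonboundarypoint}. Throughout I read ``the forward orbit of a critical value'' as the union $O=\bigcup_{v}\{f^n(v):n\ge 0\}$ taken over all critical values $v$, i.e.\ the post-critical set; with the singleton reading the statement would not use the accumulation hypothesis at all, since a single orbit that meets a Herman-ring boundary is trapped in one cycle.

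First I would fix a neighbourhood $U$ of $b$ contained in the Fatou set; this exists because $b$ is a stable bov and hence lies in $\mathcal{F}(f)$. Since $\mathcal{F}(f)$ is completely invariant, $\bigcup_{n\ge 0}f^n(U)\subseteq \mathcal{F}(f)$, so this set is disjoint from every Herman-ring boundary (which lies in $\mathcal{J}(f)$). By hypothesis $b$ is the only limit point of the critical values, so all but finitely many critical values lie in $U$; if $v\in U$ then its whole forward orbit stays in $\mathcal{F}(f)$ and meets no Herman-ring boundary. Hence only the finitely many critical values $v_1,\dots,v_m$ lying outside $U$ can have their orbits touch a Herman-ring boundary.

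Next I would show that each single orbit $\{f^n(v_i):n\ge 0\}$ touches the boundaries of only finitely many Herman rings. The key point is that if a finite point $z$ lies on the boundary of a Herman ring $H$ of period $p$ and $f(z)\neq\infty$, then $f(z)$ lies on the boundary of the Herman ring $H_1$ carrying $f(H)$: indeed $f(z)\in\overline{H_1}$ by continuity, while $f(z)\notin H_1$, for otherwise a whole neighbourhood of $z$ would map into $\mathcal{F}(f)$, forcing $z\in\mathcal{F}(f)$. Iterating, the forward orbit of $z$ is confined to the finitely many boundary curves of the cycle of $H$ until, possibly, it reaches $\infty$, after which it contributes no further boundary points. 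Either way the trace of the orbit of $v_i$ on $\mathcal{J}(f)$ lies in finitely many Julia components, so Theorem~\ref{commonboundarypoint} bounds the number of Herman rings it touches (indeed the single cycle already contains only finitely many). Summing over $i=1,\dots,m$ yields the desired finiteness.

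I expect the main obstacle to be the third step: verifying cleanly that the forward orbit of a point on $\partial H$ cannot escape the cycle of $H$ — in particular that $f$ sends boundary to boundary and that the only way out is through a pole, so that the orbit's trace on the Julia set is confined to finitely many Julia components, at which stage Theorem~\ref{commonboundarypoint} applies. The first step, by contrast, is where the accumulation hypothesis enters, and it is essentially forced by the complete invariance of the Fatou set together with the stability of the bov.
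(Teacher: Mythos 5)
Your proposal is correct and follows essentially the same route as the paper: reduce to the finitely many critical values not absorbed into the Fatou set via the accumulation hypothesis, show that once an orbit meets a Herman-ring boundary it is trapped in the finitely many Julia components associated to that cycle, and then invoke Theorem~\ref{commonboundarypoint}. The only (harmless) difference is that you establish the trapping by tracking $f(\partial H)\subseteq\partial H_1$ directly, whereas the paper works one level up, using the connectedness of images of Julia components (Lemma~\ref{singleton}(5)) to get $f^p(J)\subseteq J$ for the Julia component $J$ containing $\partial H$; your aside that a boundary point cannot map to $\infty$ is also justified by that same lemma, since poles are singleton Julia components.
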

\begin{proof}
	  If the bov is the only limit point of the critical values, the number of critical values lying in the Julia set is finite. Let $c$ be such a critical value whose forward orbit intersects the boundary  of some Herman ring. We shall be done by showing that the forward orbit of $c$ intersects the boundaries of at most finitely many Herman rings. If $\partial H$ is  the component of the boundary of a $p$-periodic Herman ring $H$  then $f^{p}(\partial H) \cap \partial H \neq \emptyset$. To see this, observe that each point of $\partial H$ is a limit point of $f^p$-invariant Jordan  curves in the Herman ring. Further, if $J$ is the Julia component containing $\partial H$ then $f^p (\partial H) \subseteq f^{p}(J) $. 
	Since $f^k (J)$ is connected for all $k$  
	(by Lemma~\ref{singleton}(5)), $f^p (J) \subseteq J$.  Further, $f^{np}(J) \subseteq J$ for all $n$.  Therefore the number of Julia components intersecting the forward orbit of $c$ is finite. Since $J$ intersects the boundaries of at most finitely many Herman rings by the previous theorem, we are done.
\end{proof}

 \par 
 A fixed point $z_0$ of $f$ is called weakly repelling if $|f'(z_0)|>1$ or $f'(z_0)=1$. These are related to the connectedness of the Julia sets. The existence of weakly repelling fixed points for transcendental meromorphic functions are well-known in the presence of multiply connected Fatou components. This is proved for wandering domains in~\cite{ber-weakly} whereas \cite{fag-weakly} deals with the case of    immediate attracting and parabolic domains. These results ensure at least one weakly repelling fixed points. We prove the existence of infinitely many such fixed points for functions with stable bov. 
\begin{thm}
If $f \in \mathcal{M}_S$  then it has infinitely many weakly repelling fixed points. Further, if the bov is the only limit point of the critical values then $f$ has infinitely many repelling fixed points.
	\label{repelling}\end{thm}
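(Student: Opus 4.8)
The plan is to exploit the infinitely connected Fatou component forced by the bov and to manufacture one weakly repelling fixed point inside each of its infinitely many complementary ``holes'' by an argument-principle count, and then to discard the parabolic ones under the extra hypothesis. First I would fix a disk $D$ centred at the bov $b$ small enough that $D \subseteq \mathcal{F}(f)$ (possible since $b$ is stable) and set $W = f^{-1}(D)$. By Lemma~\ref{singleton}(1) the domain $W$ is infinitely connected and, being a connected open subset of $\mathcal{F}(f)$, lies in a single Fatou component; in particular $\widehat{\mathbb{C}} \setminus W$ has infinitely many bounded components $K_1, K_2, \dots$. The crucial observation is that each $K_j$ contains a pole of $f$: since $f$ maps $\partial K_j$ into $\partial D$ and $b$ is omitted, the open mapping property forces $f(K_j)$ to cover a neighbourhood of $\infty$, so $f$ attains the value $\infty$ in $K_j$.

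Next I would surround each $K_j$ by a Jordan curve $\gamma_j \subset W$ enclosing $K_j$ but no other hole, chosen outside a fixed small neighbourhood of $b$. On $\gamma_j$ we have $f(z) \in D$, so $f(z)-z$ and $b-z$ differ by less than $|z-b|$, and a Rouch\'e-type homotopy shows that the winding number of $f(z)-z$ about $0$ along $\gamma_j$ equals that of $b-z$, which is $0$ or $1$. By the argument principle applied to the meromorphic function $f(z)-z$, the number of fixed points of $f$ inside $\gamma_j$ exceeds the number of enclosed poles by this winding number; since $K_j$ already contains a pole, at least one fixed point is enclosed. To promote this to a \emph{weakly repelling} fixed point I would invoke the holomorphic fixed point index estimates of Buff used in~\cite{fag-weakly, ber-weakly}: the index sum over the enclosed fixed points, together with the inward boundary behaviour and the enclosed poles, cannot be accounted for by attracting fixed points alone, so at least one enclosed fixed point is weakly repelling. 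As such a point lies in $\mathcal{J}(f)$ while $\gamma_j \subset \mathcal{F}(f)$, it sits in $\overline{K_j}$; choosing the $\gamma_j$ to enclose pairwise distinct holes yields pairwise distinct fixed points and hence infinitely many.

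For the second assertion I would show that only finitely many of these weakly repelling fixed points can fail to be repelling. A weakly repelling fixed point that is not repelling has multiplier exactly $1$, i.e.\ is parabolic, and its immediate basin is a Fatou component containing a singular value whose forward orbit converges to the point. Suppose there were infinitely many such parabolic fixed points; their immediate basins are pairwise disjoint, producing infinitely many distinct singular values caught in them, infinitely many of which would have to be critical values. By hypothesis the critical values accumulate only at $b$; but any critical value sufficiently close to $b$ lies in the Fatou component of $b$ (which contains $D$), and the points of a single Fatou component can be attracted to at most one parabolic fixed point. Hence all but finitely many critical values are barred from feeding pairwise distinct parabolic basins, so only finitely many parabolic fixed points occur, and infinitely many of the weakly repelling fixed points produced above are genuinely repelling.

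The main obstacle is the passage from ``a fixed point is enclosed'' to ``a \emph{weakly} repelling fixed point is enclosed'': the bare winding count only produces fixed points, and Buff's index machinery a priori yields a \emph{virtually} repelling point, so one must verify that the indifferent cases with $|f'|=1$ are controlled and that the conclusion is genuine weak repulsion. The other delicate point, in the second part, is ensuring that asymptotic values other than $b$ cannot feed infinitely many distinct parabolic basins; this has to be ruled out using the structure of the singular values of functions in $\mathcal{M}_S$, so that the critical-value count above suffices.
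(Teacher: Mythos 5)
Your proposal is correct and follows essentially the same route as the paper: the paper likewise takes the infinitely connected preimage of a neighbourhood of the bov, surrounds each bounded complementary component (each containing a pole) by a Jordan curve in the Fatou set, and extracts a weakly repelling fixed point from each hole --- except that it simply cites Corollary~2.9 of \cite{Baranski}, which packages exactly the Buff-type fixed-point-index argument you sketch as your ``main obstacle.'' The second assertion is handled just as you propose, with your worry about extra asymptotic values resolved by Lemma~\ref{critically infinite}(2) (the bov is the only asymptotic value), so that only finitely many critical values are available to feed pairwise disjoint parabolic basins.
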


The existence of periodic points on the boundary of invariant rotation domains of rational functions is investigated by Imada \cite{periodicptSiegel}. He has proved that the boundary of an invariant  rotation domain does not contain any periodic point except cremer points (i.e., irrationally indifferent periodic point not corresponding to a Siegel disk). Any such result for transcendental meromorphic functions are apparently not known. As a corollary to the following theorem we have shown that  repelling fixed points cannot be on the boundary of invariant rotation domains whenever a function has stable bov and is injective in a neighborhood of the rotation domain. The theorem to follow   says that if a $p$-periodic rotation  domain contains a repelling $p$-periodic point then the topology of the boundary of $D$ is complicated.
\begin{thm}
Let $f\in \mathcal{M_S}$  and $D$ be a $p$-periodic rotation  domain of $f$. If  the boundary $\partial D$ of $D$ contains a repelling $p$-periodic point $z_0$ then, for each $ k \geq 1$ there is a component $D_{-k}$ of $f^{-pk}(D)$  different from $D$ such that $z_0 \in \partial D_{-k}$.
	\label{boundary}
\end{thm}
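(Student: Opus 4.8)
The plan is to build the components along a backward chain. Write $g=f^p$; then $z_0$ is a repelling fixed point of $g$ with multiplier $\lambda=g'(z_0)$, $|\lambda|>1$, and $g(D)=D$ with $g|_D$ conformally conjugate (via the map $\phi$ of the Introduction) to an irrational rotation of a disk or an annulus, the invariant Jordan curves of $D$ being compactly contained in $D$. I would produce, for each $k\ge 1$, a component $D_{-k}$ of $f^{-pk}(D)$ with $g(D_{-k})=D_{-(k-1)}$ (setting $D_0=D$) and $z_0\in\partial D_{-k}$. Granting the base step $D_{-1}\ne D$, the whole chain is automatically pairwise distinct and distinct from $D$: from $D_{-1}\ne D$ one gets $D_{-m}\ne D$ for all $m\ge1$ by applying $g^{m-1}$, and if $D_{-i}=D_{-j}$ with $i<j$ then applying $g^{i}$ gives $D=D_{-(j-i)}$, a contradiction. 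Hence it suffices to treat the base step and a purely local inductive step.

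For the base step I must find a component of $g^{-1}(D)$ different from $D$ with $z_0$ on its boundary. Since $z_0$ is repelling, $g$ is conformal on a disk $\Delta$ about $z_0$, and $h=(g|_\Delta)^{-1}$ is a contraction fixing $z_0$ with $\bigcap_n h^n(\Delta)=\{z_0\}$. As $g(D)=D$ we have $D\subseteq g^{-1}(D)$ and $g^{-1}(D)\cap\Delta=h\big(D\cap g(\Delta)\big)$, so the task reduces to showing that this local preimage is strictly larger than $D\cap\Delta$ arbitrarily near $z_0$; equivalently, in a Koenigs coordinate $\psi$ with $\psi\circ g=\lambda\psi$, that the forward $\times\lambda$-invariant set $\psi(D)$ is not backward invariant at $0$.

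To prove this I argue by contradiction. If $h(D\cap N)\subseteq D$ for some neighborhood $N$ of $z_0$, shrink $N$ so that $h(N)\subseteq N$; then for each $w\in D\cap N$ the orbit $h^n(w)$ remains in $D\cap N$ and tends to $z_0$. Since $g|_D$ is a bijection, the preimage of a point of $D$ lying in $D$ is unique and given by the inverse rotation $R=(g|_D)^{-1}$; as each $h^n(w)\in D$ is a $g$-preimage of $h^{n-1}(w)$, induction gives $h^n(w)=R^n(w)$. But $R$ preserves the invariant Jordan curve through $w$, which is compactly contained in $D$, so $R^n(w)$ stays bounded away from $\partial D\ni z_0$, contradicting $h^n(w)\to z_0$. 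Therefore no such $N$ exists and points of $g^{-1}(D)\setminus D$ accumulate at $z_0$.

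It remains to turn this accumulation into a single boundary component, which is the step I expect to be the main obstacle. I would argue in the Koenigs coordinate along an \emph{access} of $D$ to $z_0$: choosing a component $P$ of $D\cap g(\Delta)$ with $z_0\in\partial P$, the image $h(P)$ is connected, lies in $g^{-1}(D)$, and has $z_0\in\partial h(P)$, so the component $D_{-1}$ of $g^{-1}(D)$ containing $h(P)$ satisfies $z_0\in\partial D_{-1}$; the delicate part is to select the access so that $h(P)\not\subseteq D$, for which the orbit argument of the previous paragraph is used, since an access feeding entirely back into $D$ would again force an $R$-orbit to converge to $z_0$. The inductive step is then immediate and needs no rotation: given any Fatou component $C$ with $z_0\in\partial C$, the same contraction $h$ carries an access of $C$ at $z_0$ to a connected subset of $g^{-1}(C)$ abutting $z_0$, whose component $D_{-k}$ satisfies $g(D_{-k})=C=D_{-(k-1)}$ and $z_0\in\partial D_{-k}$. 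Together with the distinctness observed above, this produces the required $D_{-k}$ for every $k$ and, as $k$ varies, exhibits $z_0$ on the boundary of infinitely many Fatou components.
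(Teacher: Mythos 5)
This is essentially the paper's proof: you take the attracting local inverse branch of $f^p$ at $z_0$ and use the $f^p$-invariant Jordan curve through a nearby point of $D$ (equivalently, that the only preimage in $D$ of a point of $D$ is its image under the inverse rotation) to force the backward orbit out of $D$, then pull back repeatedly, with the same disjointness induction to get distinctness from $D$. The ``delicate part'' you single out --- upgrading the accumulation of $f^{-p}(D)\setminus D$ at $z_0$ to membership of $z_0$ in the boundary of a \emph{single} component --- is precisely the step the paper itself dispatches with ``it is seen that $z_0\in\partial D_{-k}$'', so your sketch is no less complete than the published argument.
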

\begin{cor}
	Under the assumption of the above Theorem, if $f $ is univalent in a neighborhood of one of its invariant rotation  domain $D$ then there is no repelling fixed point on the boundary of $D$.
\end{cor}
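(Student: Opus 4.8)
The plan is to argue by contradiction, exploiting the single feature that distinguishes rotation domains from attracting or parabolic ones, namely that $f$ acts bijectively on them. Since $D$ is invariant we are in the case $p=1$, so the hypothetical repelling fixed point satisfies $f(z_0)=z_0$ and $|f'(z_0)|>1$. Suppose towards a contradiction that such a $z_0$ lies on $\partial D$, and let $N$ be the open set on which $f$ is assumed univalent; I read ``a neighborhood of $D$'' as an open set containing $\overline{D}$, so that $z_0\in N$, a whole ball around $z_0$ lies in $N$, and $D\subseteq N$.

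First I would invoke Theorem~\ref{boundary} with $k=1$. It produces a component $D_{-1}$ of $f^{-1}(D)$ with $D_{-1}\neq D$ and $z_0\in\partial D_{-1}$. Because $z_0\in\partial D_{-1}$ lies in the open set $N$, the component $D_{-1}$ meets $N$, so I may pick a point $z''\in D_{-1}\cap N$. Since $D_{-1}\subseteq f^{-1}(D)$, its image $w:=f(z'')$ lies in $D$.

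The crux is to manufacture a \emph{second}, distinct preimage of $w$ that still sits inside $N$. This is where the rotation structure enters: on the invariant rotation domain $D$, the map $f$ is conformally conjugate to an irrational rotation of an annulus or of a disk, hence is a bijection of $D$ onto itself; in particular $f(D)=D$. Consequently there exists $z'\in D$ with $f(z')=w$. Now $z'\in D\subseteq N$ and $z''\in N$, so both points lie in the univalence neighborhood. Moreover $D$ and $D_{-1}$ are two distinct components of the open set $f^{-1}(D)$ (note that $D$ itself is a component of $f^{-1}(D)$, being a Fatou component with $f(D)=D$), hence disjoint, so $z'\neq z''$. But $f(z')=w=f(z'')$, contradicting the injectivity of $f$ on $N$. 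Therefore $\partial D$ contains no repelling fixed point.

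The step carrying the real weight is the surjectivity $f(D)=D$ of the rotation, since it is exactly this property — unavailable for attracting or parabolic components, where $f(D)$ can be a proper subset — that supplies the interior preimage $z'$ and forces the clash with univalence. The only bookkeeping subtlety is the reading of ``neighborhood of $D$'': the argument needs $z_0$ and a ball about it to be inside $N$ so that $D_{-1}\cap N\neq\emptyset$, which is why I thicken to $\overline{D}$; if one literally assumes univalence only on an open set containing $D$ but not $\partial D$, this harmless enlargement must be noted at the outset.
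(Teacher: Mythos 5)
Your proof is correct and follows the route the paper intends: the corollary is stated without proof as an immediate consequence of Theorem~\ref{boundary}, and your argument is exactly the natural completion — apply the theorem with $k=1$ to get a component $D_{-1}\neq D$ of $f^{-1}(D)$ with $z_0\in\partial D_{-1}$, then use the bijectivity of $f$ on the rotation domain to produce two distinct preimages of a point inside the univalence neighborhood. Your explicit remark that ``neighborhood of $D$'' must be read as a neighborhood of $\overline{D}$ (so that $z_0$ and hence points of $D_{-1}$ lie in it) is a fair and necessary clarification of the statement.
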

Though the functions in $\mathcal{M}_S$ have infinitely many critical values, only finitely many of them can be in the Julia set whenever the bov is the only limit point of the critical values. The next two results  assume that the bov is the only limit point of the critical values. This assumption, already made in Theorem~\ref{repelling} is necessary to make sense of recurrence of singular values in a natural way. 

The $\omega-$limit set $\omega(c)$ of a critical point $c$ is the set of all accumulation points of its forward orbit, i.e., $\omega(c)=\{w: f^{n_k}(c) \to w~\mbox{as} ~k \to \infty~\mbox{for some subsequence}~ n_k\}$. This set is always closed.
A critical point is said to be recurrent if it is in its own $\omega-$limit set.  The possible presence  of either infinitely many critical points or an asymptotic value, or both  makes the study of recurrent singular values difficult for transcendental functions. In fact, the definition of recurrent   singular value itself requires extra considerations.  However, the situation is tractable for functions in $\mathcal{M}_S$  whenever the bov is the only limit point of critical values. The importance of recurrent critical points  is well-known in rational dynamics (See for example~\cite{mane}). The next two results demonstrate  the influence of recurrent critical points on Julia components, wandering domains and rotation  domains.
Wandering Julia component of a rational function are studied by Guizhen et al. in \cite{topologyJC}. They have proved that each such Julia component, except countably many is either non-separating or its complement has two components. Following is a sufficient condition for Julia component of some transcendental functions  to be singleton.

\begin{thm}\label{totallydisconnectedness}
	For $f \in \mathcal{M}_S$ let,
	\begin{enumerate}
		
		\item the bov be the only limit point of critical values,
		\item the number of critical points corresponding to each critical value lying in the Julia set is finite, and
			\item every Fatou component containing a singular value is pre-periodic.
	\end{enumerate} 
	If $J$ is a Julia component whose forward orbit accumulates at a point in $\mathbb{C}$ which is neither a parabolic periodic point nor in the $\omega$-limit set of any recurrent critical point, then $J$ is singleton.    
\end{thm}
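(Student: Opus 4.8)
The plan is to run a Mañé-type contraction argument tailored to the class $\mathcal{M}_S$. First I would record that the accumulation point $\zeta$ lies in $\mathcal{J}(f)$: since $\mathcal{J}(f)$ is closed and forward invariant, every accumulation point of $\bigcup_n f^n(J)$ is in $\mathcal{J}(f)$, and because the bov lies in the Fatou set, $\zeta$ is different from the bov. By hypothesis (1) a small disk $U=D(\zeta,r)$ therefore meets only finitely many critical values, and by hypothesis (2) each of these is attained by only finitely many critical points; hypothesis (3) forces the forward orbits of the singular values meeting the Julia set to be eventually periodic. These are exactly the ingredients needed to transport Mañé's lemma to this transcendental setting.

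The engine I would establish is: there is a disk $U\ni\zeta$ and an integer $N$ such that for every $n$ and every component $V$ of $f^{-n}(U)$ the map $f^{n}\colon V\to U$ is proper of degree at most $N$, and $\max_V \operatorname{diam} V \to 0$ as $n\to\infty$. The degree bound comes from the finiteness in (1)--(2), while the uniform shrinking comes from expansion of the hyperbolic metric on the complement of the (controlled) post-singular set, using that $\zeta$ is neither a parabolic periodic point nor in the $\omega$-limit set of a recurrent critical point. This is the place where the three hypotheses are genuinely consumed, and adapting the classical rational argument past the asymptotic value (the bov) and the infinitude of critical points is the first real obstacle.

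With the shrinking neighborhood in hand, I would exploit that each $f^n(J)$ is connected (Lemma~\ref{singleton}) and that, by accumulation, there are times $n_k\to\infty$ and points $w_k\in f^{n_k}(J)\cap D(\zeta,r/2)$. The goal is to capture all of $J$ in one shrinking pullback: if $f^{n_k}(J)\subseteq U$, then the connected set $J$ lies in a single component $V_k$ of $f^{-n_k}(U)$, whence $\operatorname{diam} J\le \operatorname{diam} V_k\to 0$ and $J$ is a singleton. So everything reduces to showing that, along a subsequence of returns, the continuum $f^{n_k}(J)$ is actually contained in $U$, equivalently that $\liminf_k \operatorname{diam} f^{n_k}(J)=0$. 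I expect this to be the main obstacle. The plan to handle it is to pass to concentric disks $U'\Subset U$, pull back the annulus $U\setminus\overline{U'}$ through the bounded-degree proper maps $f^{n_k}\colon V_k\to U$, and use that a definite modulus survives (up to the factor $N$), so that the portion of $J$ lying in $V_k$ cannot simultaneously have image crossing the annulus and coexist with a large-diameter continuum; combined with the uniform shrinking of $V_k$ this forces the diameters of the returns to zero. Once that is secured, the contraction step above closes the argument and yields that $J$ is a point.
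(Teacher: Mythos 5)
Your core engine is the same as the paper's: this is precisely the Shishikura--Tan version of Ma\~{n}\'{e}'s argument that the authors adapt, with a forward-invariant compact set $B$ absorbing the orbits of the critical values in the Fatou set (this is where hypothesis (3) is consumed), a hyperbolic domain $\Omega$ obtained by further deleting repelling orbits planted near the finitely many non-recurrent critical points in $\mathcal{J}(f)$ (hypotheses (1)--(2)), and an induction showing that every component of $f^{-n}(D_r)$ is simply connected, that $f^n$ is proper of degree at most $d=\prod_i \deg(f,c_i)$ on it, and that its hyperbolic diameter in $\Omega$ stays below a fixed constant $C_1$. One caveat on your phrasing: the degree bound does not follow from finiteness alone; in the paper it is proved \emph{jointly} with the diameter bound by induction (a non-recurrent critical point cannot recur in the pullback chain because that would force $d_\Omega(c,f^k(c))<C_1$, violating the choice of $\Omega$), and the conclusion is a uniform hyperbolic-diameter bound $C_1$ rather than diameters tending to $0$ for a fixed disk; smallness of the spherical diameter is then extracted by shrinking the disk to $D_{\rho r}$ with $C(d,\rho)$ small and applying the spherical-diameter lemma.

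The genuine divergence is in the step you flag as ``the main obstacle.'' The paper does not face it at all, because it \emph{defines} ``the forward orbit of $J$ accumulates at $w$'' to mean $f^{n_k}(J)\to\{w\}$ in the Hausdorff metric; with that reading, $f^{n}(J)\subset D_{\rho r}$ for some $n$ is part of the hypothesis, and the contradiction $|W'|_s<|J|_s$ with $J\subset W'$ is immediate. You have adopted the weaker reading (some points of $f^{n_k}(J)$ converge to $\zeta$) and therefore must prove $\liminf_k \operatorname{diam} f^{n_k}(J)=0$ yourself. Your modulus sketch for this is not a proof: pulling back the annulus $U\setminus\overline{U'}$ controls the components of $f^{-n_k}(U)$, but it does not by itself prevent $f^{n_k}(J)$ from being a continuum of definite diameter that merely clips $D(\zeta,r/2)$ infinitely often, and you give no mechanism that rules this out. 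So either state at the outset that you are using the Hausdorff-convergence interpretation (in which case your argument closes and matches the paper), or supply a complete proof of the shrinking of the returns; as written, that step is a gap.
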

Each limit  of the sequence of iterates of a transcendental meromorphic function on its wandering domain is known to be in the derived set of the post singular set~\cite{zheng-singu-2003}. Under the assumption of the above theorem,  $U$ is not wandering giving that its grand orbit cannot contain any wandering domain. Further, every other possible wandering domain of $f \in \mathcal{M}_S$ is simply connected by Theorem 3.1(1) ~\cite{bov-1}. This gives that the boundary of wandering domains is connected. With a condition on the boundary of the wandering domain, we have shown that each  limit of $f^n$ on its wandering domain is a parabolic periodic point or in the 
$\omega$-limit set of recurrent critical points.
\begin{cor}
Let $J$ be the boundary of a wandering domain of a function in $\mathcal{M}_S$ satisfying all the assumptions of Theorem~\ref{totallydisconnectedness}. If  its forward orbit $\{f^{n}(J)\}_{n>0}$ accumulates at a point $w \in \mathbb{C}$ then $w$ is either a parabolic periodic point or is in the $\omega-$limit set of recurrent critical points.
\end{cor}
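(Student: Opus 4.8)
The plan is to reduce the Corollary to Theorem~\ref{totallydisconnectedness} by passing to the Julia component that carries the boundary of the wandering domain, and then to rule out the degenerate conclusion on topological grounds. Write $W$ for the wandering domain, so that $J=\partial W$, and argue by contradiction: suppose the forward orbit $\{f^{n}(J)\}_{n>0}$ accumulates at some $w\in\mathbb{C}$ that is neither a parabolic periodic point nor a point of the $\omega$-limit set of any recurrent critical point. First I would invoke the structural input that is already available: by Theorem~3.1(1) of~\cite{bov-1} the wandering domain $W$ is simply connected, so $J=\partial W$ is connected, and being a connected subset of $\mathcal{J}(f)$ it is contained in a unique Julia component $\widehat{J}$.

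Next I would transfer the accumulation hypothesis from $J$ to $\widehat{J}$. Since $J\subseteq\widehat{J}$, we have $f^{n}(J)\subseteq f^{n}(\widehat{J})$ for every $n$, and each $f^{n}(\widehat{J})$ is connected by Lemma~\ref{singleton}(5); consequently the forward orbit of the Julia component $\widehat{J}$ also accumulates at $w$. Now $\widehat{J}$ meets exactly the hypotheses of Theorem~\ref{totallydisconnectedness}: $f\in\mathcal{M}_S$ satisfies conditions (1)--(3) by assumption, and $\widehat{J}$ is a Julia component whose forward orbit accumulates at the ``bad'' point $w$. The theorem therefore forces $\widehat{J}$ to be a singleton, say $\widehat{J}=\{p\}$, whence $\emptyset\neq\partial W\subseteq\{p\}$ gives $\partial W=\{p\}$.

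It remains to exclude $\partial W=\{p\}$. If this held then $\overline{W}=W\cup\{p\}$, so $\widehat{\mathbb{C}}\setminus\{p\}=W\sqcup(\widehat{\mathbb{C}}\setminus\overline{W})$. When $\widehat{\mathbb{C}}\setminus\overline{W}\neq\emptyset$ this is a disconnection of the connected set $\widehat{\mathbb{C}}\setminus\{p\}$ into two nonempty open pieces, which is absurd; and when $\widehat{\mathbb{C}}\setminus\overline{W}=\emptyset$ we get $W=\widehat{\mathbb{C}}\setminus\{p\}$, forcing $\mathcal{J}(f)\subseteq\{p\}$ and contradicting that the Julia set is infinite. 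Hence $\partial W$ cannot be a single point, and this contradiction shows that $w$ must be a parabolic periodic point or a point of the $\omega$-limit set of a recurrent critical point.

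The one genuinely delicate step is the reduction itself: applying Theorem~\ref{totallydisconnectedness} to the component $\widehat{J}$ rather than to $J$ directly. This is legitimate precisely because of the simple connectivity of $W$, which guarantees that $\partial W$ is connected and therefore sits inside a single Julia component, together with the stability of connectedness under forward iteration that lets the accumulation at $w$ pass from $J$ to $\widehat{J}$. Once these are secured, the remaining work is the elementary topological observation that the boundary of a domain in the sphere cannot be a single point.
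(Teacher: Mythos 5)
Your proposal is correct and follows essentially the same route as the paper: use simple connectivity of the wandering domain (Theorem~3.1(1) of~\cite{bov-1}) to get that $\partial W$ is connected and non-degenerate, then apply Theorem~\ref{totallydisconnectedness} in contrapositive form. The paper's own proof is just two sentences that assert ``$J$ is connected and not singleton''; you supply the details it leaves implicit, namely passing to the Julia component containing $\partial W$ and the elementary topological fact that the boundary of a bounded domain in $\widehat{\mathbb{C}}$ cannot be a single point.
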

\begin{proof}
Each wandering domain of $f$ is simply connected and so $J$ is connected and not singleton. By Theorem~\ref{totallydisconnectedness}, $w$ is either a parabolic periodic point or is in the $\omega-$limit set of recurrent critical points.
 
\end{proof}
The forward orbit of recurrent critical points are known to be dense in the boudary of rotation  domains of rational maps (See for example ~\cite{mane}). The main idea was to show a kind of backward contraction of pull backs of disks disjoint from the $\omega$-limit sets of recurrent critical points. The same conclusion is obtained for functions with stable bov with some other assumptions.
\begin{thm}
	Let  $f \in \mathcal{M}_S$ and  its bov be the only limit point of its critical values and the number of critical points corresponding to each critical value is finite. If each critical value belonging to the Fatou set is contained in  some pre-periodic Fatou component then the boundary of each rotation  domain is contained in the $\omega-$limit set of the recurrent critical points.
	\label{rotationaldomainboundary}\end{thm}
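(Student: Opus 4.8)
The plan is to run the Ma\~n\'e-type dichotomy that already powers Theorem~\ref{totallydisconnectedness}, and to close off the parabolic alternative by exploiting that $f^p$ acts as an isometry on the rotation domain. Fix a rotation domain and, passing to $g=f^p$, assume it is invariant; call it $D$ and let $\phi:D\to A$ be the conjugacy onto an annulus or disk $A$ with $\phi\circ g\circ\phi^{-1}(w)=e^{2\pi i\theta}w$ for irrational $\theta$. Write $\gamma_r=\phi^{-1}(\{|w|=r\})$ for the invariant Jordan curves foliating $D$; as $r$ tends to a boundary radius these curves accumulate on the corresponding boundary component $\Gamma$ of $\partial D$, which is a nondegenerate continuum contained in a single Julia component (it is not a point, being a limit of the $\gamma_r$). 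The goal is to show that every $z_0\in\partial D$ lies in $\omega(c)$ for some recurrent critical point $c$.

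First I would record the metric input. Since $g|_D$ is a conformal automorphism of $D$, it is an isometry of the hyperbolic metric $\rho_D$, and it permutes the curves $\gamma_r$, sending each one onto itself by an irrational rotation. Consequently $g$ can exhibit neither uniform expansion nor uniform attraction at a point of $\partial D$: along any $\gamma_r$ the Euclidean comparison $|(g^n)'(w)|=\rho_D(w)/\rho_D(g^n(w))$ stays bounded above and below, because $g^n(w)$ remains on the same curve $\gamma_r$, whose hyperbolic height is comparable along itself by the Koebe distortion theorem. This is the structural obstruction I will play against the contraction mechanism.

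Next, suppose for contradiction that some $z_0\in\partial D$ is not in $\omega(c)$ for any recurrent critical point. Under hypotheses (1)--(3) the backward-contraction argument behind Theorem~\ref{totallydisconnectedness} is available: because the bov is the only limit point of critical values, only finitely many singular values lie in the Julia set, and the finiteness of critical points per critical value together with the pre-periodicity of the Fatou components carrying critical values lets one choose a disk $U\ni z_0$ whose pull-backs along the backward orbit have uniformly bounded degree, obey Koebe distortion control, and---if $z_0$ is neither parabolic periodic nor in the $\omega$-limit set of a recurrent critical point---have spherical diameters tending to $0$. Since $\partial D$ is forward invariant and $g$ is recurrent on it, the orbit of $z_0$ returns to $U$ along a subsequence, so the corresponding inverse branches force pull-backs of $U$ to shrink, i.e.\ $g$ is expanding along the orbit of $z_0$. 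Each such pull-back, however, meets $D$ and hence contains arcs of the invariant curves $\gamma_r$ accumulating at $z_0$; by the isometry these arcs keep a fixed intrinsic size and cannot be crushed to points, contradicting the diameter estimate. This excludes the expanding alternative.

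Finally the same isometry excludes the parabolic alternative: if $z_0\in\partial D$ were a parabolic periodic point, its attracting petals would draw a full one-sided neighborhood of $z_0$ toward $z_0$ under forward iteration, yet the curves $\gamma_r$ passing arbitrarily close to $z_0$ are merely rotated and stay at fixed hyperbolic height, so they can be neither attracted to nor repelled from $z_0$---impossible for a parabolic fixed point of $g$ with a petal opening into $D$. Having ruled out both the parabolic case and the expanding case, the dichotomy forces $z_0\in\omega(c)$ for some recurrent critical point $c$; as $z_0\in\partial D$ was arbitrary, this yields $\partial D\subseteq\bigcup_c\omega(c)$. The main obstacle I anticipate is not this final contradiction but establishing the contraction lemma itself in the transcendental meromorphic setting: one must guarantee that the backward branches of a small disk around $z_0$ avoid the bov and the remaining singular values, so that they are univalent of bounded degree and satisfy Koebe estimates. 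This is exactly where the stable-bov structure of Lemma~\ref{singleton} and hypotheses (1)--(3) must be used with care, since the pre-image of any neighborhood of the bov is infinitely connected.
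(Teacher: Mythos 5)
Your overall strategy coincides with the paper's: run the Ma\~{n}\'{e}-type backward-contraction machinery of Theorem~\ref{totallydisconnectedness} at a boundary point $z_0$ of the rotation domain assumed not to lie in the $\omega$-limit set of any recurrent critical point, and contradict the resulting shrinking with the fact that pull-backs inside the rotation domain act as rotations and cannot shrink. Two steps, however, do not hold up as written.

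First, the sentence ``since $\partial D$ is forward invariant and $g$ is recurrent on it, the orbit of $z_0$ returns to $U$ along a subsequence'' is the load-bearing step that produces your inverse branches, and it is unjustified: forward invariance of $\partial D$ under $g=f^{p}$ does not make $g$ recurrent at the particular point $z_0$ (a continuous surjection of a compact set need not be recurrent at a given point, and nothing is known about the dynamics of $g$ on the boundary of a rotation domain in this generality). The paper needs no recurrence at all: for every $n$ it produces a point $z_n\in\partial V_k$ with $f^{np}(z_n)=z_0$ (every boundary point has a preimage on the boundary because $f^{p}$ maps $\overline{V_k}$ onto itself), pulls the disk $D_{r}(z_0)$ back along these preimages, and obtains the shrinking from the bounded hyperbolic diameter estimate together with Remark~\ref{shrinking} and Lemma~\ref{sphericaldiameter}, since $d\bigl(z_n,\partial f^{-np}(\widehat{\mathbb{C}}\setminus B)\bigr)\to 0$. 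Second, your non-shrinking input --- ``these arcs keep a fixed intrinsic size'' --- is measured in the hyperbolic metric of $D$, which gives no lower bound on Euclidean or spherical diameter near $\partial D$: a set of fixed hyperbolic diameter can have arbitrarily small spherical diameter. What actually works, and is what the paper does, is to fix two $f^{p}$-invariant Jordan curves $\gamma_1,\gamma_2$ meeting the disk around $z_0$ and note that the pull-back within the cycle meets $f^{i}(\gamma_1)$ and $f^{i}(\gamma_2)$ for some $0\le i\le p-1$, i.e.\ one of finitely many fixed pairs of disjoint compact curves, so its Euclidean diameter is bounded below by a constant $\epsilon>0$ independent of $n$. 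Finally, your direct exclusion of parabolic points from $\partial D$ via petals is more than is needed and is itself delicate (one must check that the local inverse fixing the petal agrees with $(f^{p}|_{D})^{-1}$); the paper simply observes that there are finitely many parabolic periodic points, so it suffices to treat non-parabolic $z_0$, the remaining boundary points being covered because $\omega$-limit sets are closed.
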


Throughout this article, the Fatou component containing the bov is denoted by $U$. The disk $\{z: |z-a| < \delta\}$ is denoted by $D_{\delta}(a)$ for  $\delta>0$ and $a \in \mathbb{C}$. All the functions considered in th earticle are in $\mathcal{M}_S$ unless stated otherwise.

\section{Preliminary results}

 A non-empty connected and closed subset of $\widehat{\mathbb{C}}$ is called a continuum. A continuum $K$ is called  full if $\widehat{\mathbb{C}} \setminus K$ is connected. If $K$ does not contain the point at $\infty$ and is not full then its complement has at least one bounded component. We say $K$ surrounds a point $z$ if a bounded component of $\widehat{\mathbb{C}} \setminus K$ contains $z$. Clearly a full continuum does not surround any point.  A Julia component is called full if it is a full contiuum. Otherwise, it is called non-full. Non-full continua are also called separating. The following lemma  is a generalization of Lemma 1 (\cite{tkzheng}) and is to be used frequently. The proof is essentially the same. We require a  lemma  from~\cite{tk4}.
 \begin{lem}\label{bounded}
If $f$ is a meromorphic functions with an omitted value and $D$ is a bounded domain then the closure of $f(D)$ cannot contain any omitted value.
\end{lem}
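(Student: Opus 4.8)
The plan is to argue by contradiction, using nothing more than the compactness of $\overline{D}$ together with the continuity of $f$ regarded as a map into the Riemann sphere. Fix any omitted value $a$ of $f$; by the definition given, $a \in \mathbb{C}$ and $f(z) \neq a$ for every $z \in \mathbb{C}$. The goal is to show $a \notin \overline{f(D)}$, and since $a$ is an arbitrary omitted value this yields the lemma.

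First I would note that, $D$ being bounded, its closure $\overline{D}$ is a compact subset of $\mathbb{C}$. As the only essential singularity of $f$ sits at $\infty$, the restriction of $f$ to a neighbourhood of $\overline{D}$ is meromorphic and hence, viewed as a map $f\colon\mathbb{C}\to\widehat{\mathbb{C}}$, is continuous on $\overline{D}$, even at the (finitely many) poles that may lie there, where it takes the value $\infty$. Consequently $f(\overline{D})$ is a compact, and therefore closed, subset of $\widehat{\mathbb{C}}$. Since $f(D)\subseteq f(\overline{D})$ and $f(\overline{D})$ is closed, I obtain $\overline{f(D)}\subseteq f(\overline{D})$. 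Thus every point of $\overline{f(D)}$ has the form $f(z_0)$ with $z_0\in\overline{D}\subseteq\mathbb{C}$, i.e.\ it is genuinely attained by $f$ on $\mathbb{C}$. If the omitted value $a$ belonged to $\overline{f(D)}$ it would equal $f(z_0)$ for some $z_0\in\mathbb{C}$, contradicting $f(z)\neq a$ for all $z$. Hence $a\notin\overline{f(D)}$.

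The only step demanding genuine care is the behaviour of $f$ on $\partial D$ and at the poles inside $\overline{D}$: a priori one might fear that $f(z_n)\to a$ along some $z_n\in D$ whose subsequential limit $z_0\in\overline{D}$ is a pole. Passing to $\widehat{\mathbb{C}}$ and invoking the sphere-valued continuity of $f$ dissolves this worry, because at a pole $f(z_n)\to\infty\neq a$, so the limit $z_0$ is forced to be a point at which $f$ is finite and equals the finite value $a$---which is impossible. I expect this to be the whole of the difficulty. The underlying intuition, matching the way omitted values behave throughout the paper, is that the $f$-preimages of values lying close to an omitted value must escape every compact set, and a bounded domain simply cannot furnish such preimages; the compactness of $\overline{D}$ and the continuity of $f$ are the only inputs I anticipate needing.
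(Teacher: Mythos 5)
Your proof is correct. Note that the paper itself gives no proof of this lemma --- it is imported verbatim from the reference \cite{tk4} --- so there is no in-paper argument to compare against; your compactness argument (continuity of $f$ as a map into $\widehat{\mathbb{C}}$ on the compact set $\overline{D}$, hence $\overline{f(D)}\subseteq f(\overline{D})$, so any omitted value in $\overline{f(D)}$ would actually be attained) is the natural elementary route and handles the only delicate point, namely poles in $\overline{D}$, correctly.
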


\begin{lem} Let $f $ be a transcendental meromorphic function and $K$ be a separating continuum  not intersecting the backward orbit of $\infty$. If $K$ surrounds a point  in $\mathcal{J}(f) \setminus K$ then there is an $n \geq 0$  such that $f^n (K)$ surrounds a pole of $f$. Further, if $f$ has an omitted value then $f^{n+1}(K)$ surrounds the set of all omitted values of $f$.
\label{general1}
 \end{lem}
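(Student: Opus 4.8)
The plan is to prove the two assertions separately: the existence of an $n$ with $f^n(K)$ surrounding a pole via a normality argument, and the statement about omitted values directly from Lemma~\ref{bounded}. Throughout I may assume $f$ has at least one pole, since otherwise the conclusion about surrounding a pole is vacuous.

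For the first assertion I would argue by contradiction, assuming that $f^n(K)$ surrounds no pole for every $n\geq 0$. Write $W$ for the component of $\widehat{\mathbb{C}}\setminus K$ containing $\infty$ and set $E=\widehat{\mathbb{C}}\setminus W$, the filled hull of $K$; this is a full continuum whose interior $B^\ast$ is the union of the bounded complementary components of $K$, and by hypothesis $z_0\in B^\ast$. Since $K$ meets no pole (it avoids the backward orbit of $\infty$) and, by the contradiction hypothesis, surrounds none, $E$ is pole-free, so $f$ is holomorphic on a neighbourhood of $E$. I would then induct to show that for every $n$ the iterate $f^n$ is holomorphic on a neighbourhood of $E$ and $f^n(E)\subseteq E_n$, where $E_n$ is the filled hull of $f^n(K)$ and $E_n$ is pole-free. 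The inductive step uses that $f^n(B^\ast)$ is a bounded open set whose boundary lies in $f^n(K)$, so $f^n(B^\ast)\setminus f^n(K)$ is a union of bounded complementary components of $f^n(K)$ and hence $f^n(E)\subseteq E_n$; moreover $E_n$ is pole-free because a pole on $f^n(K)$ would force a point of $K$ into the backward orbit of $\infty$, while a pole surrounded by $f^n(K)$ is excluded by hypothesis.

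With this in hand, every $f^n$ is holomorphic on $B^\ast$ and $f^n(B^\ast)\subseteq E_n$ avoids all poles. I would then note that the entire family $\{f^n|_{B^\ast}\}$ omits the fixed set $P$ of poles and pre-poles: if some $x$ with $f^k(x)=\infty$ (for $k\geq 1$) lay in $f^n(B^\ast)$, then $f^{k-1}(x)$ would be a pole in $f^{n+k-1}(B^\ast)\subseteq E_{n+k-1}$, which is impossible. Since $f$ has poles, $P$ is infinite, so Montel's theorem makes $\{f^n|_{B^\ast}\}$ normal on $B^\ast$, contradicting $z_0\in\mathcal{J}(f)\cap B^\ast$. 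Hence there is a least $n$ with $f^n(K)$ surrounding a pole.

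For the second assertion, fix such an $n$ and a pole $p^\ast$ lying in a bounded complementary component $C$ of $f^n(K)$. Then $f$ is holomorphic on $\partial C\subseteq f^n(K)$ and has the pole $p^\ast$ inside $C$, so $f(C)$ is open, connected, contains $\infty$, and satisfies $\partial f(C)\subseteq f(\partial C)\subseteq f^{n+1}(K)$. Consequently $f(C)\setminus f^{n+1}(K)$ is a union of complementary components of $f^{n+1}(K)$ that contains $\infty$, hence it contains the whole unbounded complementary component $W'$ of $f^{n+1}(K)$; in particular $W'\subseteq\overline{f(C)}$. As $C$ is a bounded domain, Lemma~\ref{bounded} forces $\overline{f(C)}$ to contain no omitted value, so $W'$ contains none. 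Since an omitted value cannot lie on $f^{n+1}(K)$ either (it is not in the range of $f$), every omitted value must lie in a bounded complementary component of $f^{n+1}(K)$, i.e. $f^{n+1}(K)$ surrounds the set of all omitted values. The main obstacle is the normality step: one must track the forward images of the filled hull carefully, in particular absorbing the possible overlap of $f^n(B^\ast)$ with $f^n(K)$, so as to keep the iterates holomorphic and pole-free on $B^\ast$ — this is precisely what supplies the three omitted values required for Montel. The second assertion is comparatively short once Lemma~\ref{bounded} is invoked.
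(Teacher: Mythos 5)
Your argument is correct and is essentially the paper's proof with its first step unpacked: the paper obtains your conclusion that some $f^n(K)$ surrounds a pole in one line by invoking the blow-up property of the Julia set (there is an $n$ such that $f^n$ is analytic on the surrounded component $V$ and $f^n(V)$ contains a pole), which is precisely what your Montel/normality contradiction re-derives, and the second assertion is handled the same way in both proofs, by applying Lemma~\ref{bounded} to a bounded domain whose image under $f$ contains a neighbourhood of $\infty$ and whose boundary lands inside $f^{n+1}(K)$. The one caveat --- present equally in the paper's one-line invocation of the blow-up property and in your unjustified claim that ``since $f$ has poles, $P$ is infinite'' --- is that this step requires the backward orbit of $\infty$ to be infinite (it can be a single point, e.g.\ for $z\mapsto e^z/z$, whose unique pole has no preimage), which holds for the general transcendental meromorphic functions in $\mathcal{M}_S$ to which the lemma is applied but is not implied by the bare hypothesis ``transcendental meromorphic.''
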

\begin{proof}
Consider a point $z \in \mathcal{J}(f) \setminus K$ which is surrounded by $K$. Then there is a component $V$ of $\widehat{\mathbb{C}} \setminus K$ containing $z$. Note that there is an $n $ such that $f^{n}$ is analytic on $V$ and $f^{n}(V)$ contains a pole, say $w$. By the Maximum Modulus Principle, $f^{n}(K)$ surrounds $w$. The set $f^{n+1}(V)$ contains a neighborhood of $\infty$ and by Lemma~\ref{bounded},  the closure of $f^{n+1}(V)$ does not contain any omitted value. This gives that $f^{n+1}(K)$ surrounds the set of all omitted values of $f$.

\end{proof}
It is well-known that for a meromorphic function, each pre-image component of every neighbourhood of an omitted value is unbounded. Here is another simple but useful observation. 
\begin{lem}\label{nov}
	Let $f$ be a meromorphic function with an omitted value. If $V$ is a $p$-periodic component of $f$ and $f^p:V \rightarrow V$ is one-one then $V$ does not contain any omitted value of $f$. In particular,  rotation  domains donot contain any omitted value.
\end{lem}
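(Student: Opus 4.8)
The plan is to argue by contradiction, exploiting the fact that an omitted value can never lie in the image of any iterate. Suppose $b\in V$ is an omitted value of $f$. The first step is to observe that $b$ is then an omitted value of $f^p$ as well: if $f^p(z)=b$ for some $z\in V$ (where all iterates are defined and holomorphic), then writing $f^p=f\circ f^{p-1}$ gives $f(f^{p-1}(z))=b$, contradicting that $b$ is omitted by $f$. Hence $b\notin f^p(V)$ while $b\in V$, so in particular $f^p(V)\subsetneq V$. Since $f^p:V\to V$ is assumed one-one, it is a conformal isomorphism of $V$ onto the open set $\Omega:=f^p(V)\subsetneq V$.

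The crux is to control how much of $V$ is missed by $\Omega$. Here I would invoke the standard mapping property of Fatou components of meromorphic functions: if $V$ is the Fatou component containing $f^p(V)$, then $V\setminus f^p(V)$ consists of at most one point (and any such exceptional point is an asymptotic value, which is consistent with $b$). Since $b\in V\setminus\Omega$, this forces $V\setminus\Omega=\{b\}$, so that $f^p$ maps $V$ conformally onto $V\setminus\{b\}$. Finally I would extract a topological contradiction: a conformal isomorphism is a homeomorphism, so $V$ would be homeomorphic to $V\setminus\{b\}$. Because $\infty\in\mathcal{J}(f)$ we have $V\subseteq\mathbb{C}$, so $b$ is a finite interior point of the planar domain $V$; deleting it strictly raises the connectivity of $V$ by one, and homeomorphic planar domains have equal connectivity. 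This contradiction shows that no omitted value lies in $V$.

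For the final assertion about rotation domains the argument is even shorter and needs no mapping property: on a $p$-periodic rotation domain $f^p$ is conformally conjugate to an irrational rotation of an annulus or a disk and is therefore a bijection of $V$, so $f^p(V)=V$; then the relation $b\notin f^p(V)$ obtained in the first step already contradicts $b\in V$ directly. The only nontrivial input in the general case is the bound $\#\big(V\setminus f^p(V)\big)\le 1$, which I expect to be the main obstacle to state cleanly; the omitted-value observation and the connectivity argument are routine, and in the rotation-domain case the obstacle disappears entirely since surjectivity of $f^p$ is automatic.
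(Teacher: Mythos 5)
Your opening reductions are fine: $b$ omitted for $f$ implies $b$ omitted for $f^p$, hence $f^p(V)\subsetneq V$, and the rotation-domain case follows at once from surjectivity of $f^p$ there. But the step that carries all the weight in the general case has a genuine gap. The closing contradiction ``$V$ homeomorphic to $V\setminus\{b\}$ is impossible because puncturing raises the connectivity by one'' is valid only when $V$ is finitely connected; for an infinitely connected domain both connectivities are infinite and no contradiction results. (For instance $\mathbb{C}\setminus\mathbb{Z}$ is homeomorphic to $\mathbb{C}\setminus(\mathbb{Z}\cup\{1/2\})$: both complements in the sphere are countable compact sets with a single limit point, and such sets are interchangeable by an ambient homeomorphism of the sphere.) Infinitely connected periodic components are not a pathological side case in this paper: for functions with a bov, the component containing the bov is precisely the one that tends to be infinitely connected (cf.\ Lemma~\ref{singleton}), so this is the case the lemma most needs to exclude. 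A secondary issue is that the mapping property $\#\left(V\setminus f(U)\right)\le 1$ is a statement about $f$, not about $f^p$, which is not meromorphic on $\mathbb{C}$; iterating it yields only $\#\left(V\setminus f^p(V)\right)\le p$ — still finite, and enough for your purposes, but it should be said.

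The gap is repairable within your own framework: the inverse $(f^p)^{-1}\colon V\setminus E\to V$ is bounded near the isolated boundary point $b$ (after a M\"obius change of coordinates if $V$ is unbounded), hence extends holomorphically across $b$; by the open mapping theorem the extension sends $b$ into $V$, and applying $f^p$ to that image point produces a preimage of $b$, contradicting omittedness — with no hypothesis on the connectivity of $V$. The paper's own proof, however, takes an entirely different and shorter route that bypasses both the mapping-property lemma and all topology of $V$: it uses the characteristic property of omitted values that every component $B_{-1}$ of $f^{-1}(B_\epsilon(b))$ is unbounded and that $f$ is not one-one on $B_{-1}$; choosing $\epsilon$ with $B_\epsilon(b)\subset V$, such a component lies in the periodic preimage $V_{-1}$ of $V$, so $f\colon V_{-1}\to V$, and hence $f^p\colon V\to V$, fails to be one-one, contradicting the hypothesis directly.
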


\begin{proof}
	
	On the contrary, suppose that $V$ contains an omitted value $b$ of $f$. Let $B_{\epsilon}(b)$ be a ball centered at $b$ with radius $\epsilon$. Note that each component $B_{-1}$ of $f^{-1}(B_{\epsilon}(b))$ is unbounded for each $\epsilon $, and $f:B_{-1} \rightarrow \mathbb{C}$ is not one-one. Choosing $\epsilon > 0$ such that $B_\epsilon(b)\subset V$, we have $f:V_{-1} \rightarrow V$ is not one-one, where $V_{-1}$ is the periodic pre-image of $V$ under $f$. Thus $f^p:V \rightarrow V$ is not one-one. This leads to a contradiction.
\end{proof}
 We put 
Theorem 2.1 and Lemma 2.3(1) of ~\cite{bov-1} together as a lemma that exhibits the influence of the bov on all other singular values of the function.
\begin{lem}\label{critically infinite}
	Let $f $ be a meromorphic function with bov.
	Then, 
\begin{enumerate}
\item The bov  is a limit point of its critical values.
\item The function $f$ has only one asymptotic value and that is the bov.
\end{enumerate}
\end{lem}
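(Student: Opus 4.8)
The plan is to treat the two assertions separately, since they reflect different facets of the bounded-boundary condition defining a bov. Throughout write $D=D_{\delta}(b)$ for a disk as in the definition of the bov and $\Omega=f^{-1}(D)$; I will use freely that $b$ is omitted, that an omitted value is an asymptotic value, and that the singularity of the inverse lying over $b$ is direct.

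For part (1) I would argue through the classification of transcendental singularities. Suppose, for contradiction, that $b$ is not a limit point of the critical values. Then after shrinking $\delta$ the punctured disk $D\setminus\{b\}$ contains no critical value, so the restriction of $f$ to the tract $V$ over $b$ (the component of $\Omega$ carrying the direct singularity, on which $b$ is omitted) is an unbranched holomorphic covering $V\to D\setminus\{b\}$. A covering of the once-punctured disk is either finite cyclic or the universal covering; the former would make the singularity algebraic rather than a genuine transcendental singularity at $\infty$, so the covering must be universal. Hence the singularity over $b$ is logarithmic and $V$ is a simply connected unbounded tract whose boundary has an unbounded component (a curve tending to $\infty$). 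This lies in $\partial\Omega$, producing an unbounded component of $\partial f^{-1}(D)$ and contradicting the defining property of the bov. Therefore $b$ is a limit point of the critical values.

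For part (2), since $b$ is omitted it is already an asymptotic value, so only uniqueness must be proved. Suppose $a\neq b$ were a second asymptotic value, with asymptotic path $\eta(t)\to\infty$ and $f(\eta(t))\to a$. Choosing $\delta$ small enough that $a\notin\overline{D}$ (legitimate once one knows, as in \cite{bov-1}, that the bounded-boundary property persists for all smaller disks centred at $b$), the tail of $\eta$ eventually avoids $f^{-1}(\overline{D})$ and so lies in the open set $G=\widehat{\mathbb{C}}\setminus\overline{\Omega}$; being the image of a path to $\infty$, this tail lies in an unbounded component $C$ of $G$. The heart of the matter is the claim that, under the bov condition, $G$ has no unbounded component. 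One checks that $\partial C\subseteq\partial\Omega$ and that $f$ omits the disk $D$ on $C$; passing to $h=1/(f-b)$, holomorphic on $C$, one obtains a bounded holomorphic function with $|h|=1/\delta$ on the finite part of $\partial C$, while $h\to 1/(a-b)$, of strictly smaller modulus, along the tail of $\eta$. The plan is to play this boundary behaviour against the structure at $\infty$ to force an unbounded connected piece of $\partial\Omega$, contradicting the bov.

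The step I expect to be genuinely delicate is precisely this last one: the point-set topology at $\infty$. The naive argument---``an unbounded complementary component with bounded boundary components must contain a punctured neighbourhood of $\infty$, so $f$ would omit $D$ near the essential singularity, contradicting the Great Picard theorem''---is false as stated, because the boundary component of $\Omega$ running out to $\infty$ can degenerate to the single ideal point $\infty$ (as it does in the model $\Omega=\mathbb{C}\setminus\bigcup_n\overline{B_n}$ with the disks $B_n$ marching to infinity), and such a degenerate component is invisible to the bounded-boundary condition, which only sees components inside $\mathbb{C}$. Overcoming this requires genuine analytic input---via a Phragm\'{e}n--Lindel\"{o}f estimate for $h$ on $C$, or via the covering and branching structure of $f$ over $D\setminus\{b\}$ together with the accumulation of critical points from part (1)---rather than topology alone; this is the substance of Theorem 2.1 of \cite{bov-1}. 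Once the claim is in hand, $\eta$ cannot escape to $\infty$ inside $G$, so no second asymptotic value exists and $b$ is the only asymptotic value.
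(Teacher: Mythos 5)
First, note that this paper contains no proof of the lemma at all: it is imported verbatim from \cite{bov-1} (Theorem 2.1 and Lemma 2.3(1)), so the only thing to judge is whether your argument stands on its own. It does not, and the gap is one you yourself flag: in part (2) everything hinges on the claim that $G=\widehat{\mathbb{C}}\setminus\overline{\Omega}$ has no unbounded component (equivalently, that an asymptotic path for a putative second asymptotic value $a\notin\overline{D}$ cannot tend to $\infty$ while avoiding $\Omega=f^{-1}(D)$), and you explicitly decline to prove it, deferring to ``the substance of Theorem 2.1 of \cite{bov-1}'' and gesturing at a Phragm\'{e}n--Lindel\"{o}f estimate that is never carried out. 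Since this claim essentially \emph{is} the statement being proved, your part (2) is a reduction, not a proof. Moreover, your diagnosis that the step needs analytic input ``rather than topology alone'' is mistaken: it follows from point-set topology together with one classical fact quoted in Section 2 of this very paper, namely that every preimage component of a neighbourhood of an omitted value is unbounded. Indeed, let $V$ be a component of $\Omega$ and $K$ the component of $\widehat{\mathbb{C}}\setminus V$ containing $\infty$. By unicoherence of the sphere $\partial K$ is connected; $\infty\in\partial K$ (otherwise $f$ would be analytic and bounded near $\infty$); and if $K\neq\{\infty\}$ then $\partial K\neq\{\infty\}$ (else $K\cap\mathbb{C}$ would be nonempty, open and closed in $\mathbb{C}$), so by the boundary bumping lemma every component of $\partial K\setminus\{\infty\}$ accumulates at $\infty$, giving an unbounded component of $\partial\Omega$ and contradicting the bov condition. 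Applied to $V$ versus a second (necessarily unbounded) component, this shows $\Omega$ is connected; applied to $\Omega$ itself, it shows every complementary component in $\mathbb{C}$ is bounded, so the tail of $\eta$, being a connected subset of $\mathbb{C}\setminus\Omega$, lies in a single bounded component --- contradiction. This disposes of precisely the degenerate-boundary-at-$\infty$ pathology (your model $\Omega=\mathbb{C}\setminus\bigcup_n\overline{B_n}$) that you thought blocked the topological route.

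Part (1) has two further defects, both repairable. (i) To conclude that $f:V\to D\setminus\{b\}$ is a covering you must exclude \emph{all} singular values from $D\setminus\{b\}$, not merely critical values; under your contradiction hypothesis asymptotic values could a priori still accumulate at $b$, so path lifting may fail. As written, (1) silently presupposes (2), and the two parts should be proved in the opposite order (or singular values handled together). (ii) Your reason for discarding the finite cyclic covering --- that it ``would make the singularity algebraic'' --- is wrong as stated: since $b$ is omitted, the puncture-type end of such a tract cannot be a finite point and would have cluster set $\{\infty\}$; the correct argument is that a univalent uniformization of an isolated end extends across the puncture, so $V$ would contain a punctured neighbourhood of $\infty$ on which $f$ is bounded, contradicting the essential singularity at $\infty$. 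With these repairs, and the persistence of the bov property for smaller disks (which you correctly note must be cited), your logarithmic-tract argument for (1) becomes sound; but as submitted, the proposal is incomplete at its core.
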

Some useful observations on the dynamics of functions with stable bov are made in Lemma 2.4 and Theorem 3.3 of ~\cite{bov-1}. We collect them here.
\begin{lem}\label{singleton}
Let $f \in \mathcal{M}_{S}$ and $U$ be the Fatou component containing the bov. Then, 
\begin{enumerate}

\item The pre-image of $U$ is the only unbounded  Fatou component of $f$. Further, it is infinitely connected.
\item If $U'$ is a Fatou component such that $U'_{k} =U$ for some $k \geq 1$ then $U'$ is infinitely connected.
\item If $U$ is invariant then it is completely invariant.
\item  There are infinitely many poles of $f$.
\item All the components of $\mathcal{J}(f) \cap \mathbb{C}$ are bounded. In other words, every Julia component intersecting the backward orbit of  $\infty$ is singleton.
Consequently, for every non-singleton Julia component $J$, $f^k (J)$ is connected for all $k \geq 1$. 
\item If the Fatou component containing the bov  is unbounded then it is completely invariant. Consequently, $f$ has no Herman ring.

\end{enumerate}
\end{lem}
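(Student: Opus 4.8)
The plan is to read all six items off a single object, the preimage of the bov disk $f^{-1}(D)$ with $D=D_{\delta}(b)\subseteq U$, recast through $F:=1/(f-b)$. Since $b$ is omitted, $f-b$ is zero-free, so $F$ is entire; it is transcendental (as $f$ is) and vanishes exactly at the poles of $f$. Writing $M=1/\delta$ one has $f^{-1}(D)=\{|F|>M\}$ and $\partial f^{-1}(D)=\{|F|=M\}$, so the defining property of the bov says that every component of the level set $\{|F|=M\}$ is bounded. A first elementary remark: by the maximum modulus principle $\{|F|>M\}$ has no bounded component, hence every component of $f^{-1}(D)$ is unbounded. The whole argument then rests on one geometric statement, which I will call the \emph{core}: the bov condition forces $f^{-1}(D)=\{|F|>M\}$ to be connected and to have only bounded complementary components, equivalently $\{|F|\le M\}=f^{-1}(\widehat{\mathbb{C}}\setminus D)$ has only bounded components.

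To establish the core I would argue that a second unbounded component of $\{|F|>M\}$, or an unbounded component of $\{|F|\le M\}$, must carry an unbounded component of the separating level set $\{|F|=M\}$, contradicting the bov condition; the transcendence of $F$ rules out the remaining possibility that $\{|F|\le M\}$ swallows a full punctured neighbourhood of $\infty$, which would make $F$ a polynomial. Converting ``every boundary component of $f^{-1}(D)$ is bounded'' into ``no valley of $\{|F|\le M\}$ escapes to $\infty$ and no two hills of $\{|F|>M\}$ stay apart'' is the heart of the matter and the step I expect to be most delicate; it is exactly the planar topology carried out in \cite{bov-1}.

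Granting the core, item (4) follows: each bounded complementary region $C$ of $f^{-1}(D)$ contains a pole, since otherwise $f$ would be analytic on the compact set $\overline{C}$ with $f(\partial C)\subseteq\partial D$ and $f(\overline{C})\subseteq\widehat{\mathbb{C}}\setminus D$, forcing the open set $f(C)$ to be a nonempty subset of $\mathbb{C}\setminus\overline{D}$ with boundary on $\partial D$, which is impossible. As each bounded region holds only finitely many poles, finitely many poles would leave finitely many regions, making $\{|F|\le M\}$ bounded and $f$ stay inside $D$ near $\infty$, contradicting the essential singularity; hence infinitely many poles. For item (1): the core gives that $f^{-1}(D)$ is connected; any Fatou component $W$ with $f(W)\subseteq U$ satisfies $f(W)\supseteq U\setminus\{b\}$ because $b$ is the only asymptotic value (Lemma~\ref{critically infinite}(2)), and a sequence in $U$ tending to the omitted value $b$ has no finite preimage-limit, so its preimages in $W$ escape to $\infty$ and $W$ is unbounded, while $f(W)\supseteq D\setminus\{b\}$ shows $W$ meets the connected set $f^{-1}(D)$. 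Thus $f^{-1}(U)$ is a single component; an unbounded Fatou component cannot lie in a bounded complementary region, so it meets $f^{-1}(D)$ and equals $f^{-1}(U)$; and the infinitely many poles sit in infinitely many distinct bounded complementary regions of $f^{-1}(U)$, so it is infinitely connected.

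Item (5) also follows from the core: since $\mathcal{J}(f)\cap\mathbb{C}\subseteq\{|F|\le M\}$ lies in the bounded complementary regions of $f^{-1}(U)$, every Julia component in $\mathbb{C}$ is bounded; and if a non-singleton Julia component $J$ met the backward orbit of $\infty$, taking the least $m$ with $f^{m}(J)\ni\infty$ would make $f^{m}(J)$ a non-degenerate continuum through $\infty$ (it cannot collapse earlier, else $J$ is a singleton) whose finite part is an unbounded component of $\mathcal{J}(f)\cap\mathbb{C}$, a contradiction; hence such a $J$ is a singleton, and consequently a non-singleton $J$ avoids all poles and pre-poles, so $f^{k}(J)$ is a connected continuous image. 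Items (2), (3), (6) are then corollaries of (1): for (3), invariance gives $U\subseteq f^{-1}(U)$, and as $f^{-1}(U)$ is one component, $U=f^{-1}(U)$; for (6), an unbounded $U$ must be the unique unbounded component $f^{-1}(U)$, hence completely invariant, whence $\mathcal{J}(f)=\partial U$ and a Herman ring is excluded because its two boundary curves would lie in $\partial U$ on opposite sides of the ring while $U$ is a single connected completely invariant component; for (2) a backward induction along $U'\to\cdots\to U'_{k}=U$ identifies $U'_{k-1}$ with the infinitely connected $f^{-1}(U)$ and propagates infinite connectivity back through the proper maps $f$ (Riemann--Hurwitz) to $U'$. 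Throughout, the single nontrivial input is the core, and all six statements are bookkeeping once it is in hand.
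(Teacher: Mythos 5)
You should first note what you are being compared against: the paper offers \emph{no} proof of Lemma~\ref{singleton} at all --- it is imported verbatim from Lemma~2.4 and Theorem~3.3 of \cite{bov-1} (``We collect them here''). So your proposal is a reconstruction of the cited source rather than an alternative to an in-paper argument. Much of your bookkeeping is sound: recasting everything through the entire function $F=1/(f-b)$ is the right frame; the minimum-modulus argument putting a pole in every bounded complementary region and the ``finitely many poles $\Rightarrow$ $f$ bounded near $\infty$'' contradiction give item (4); the use of Lemma~\ref{critically infinite}(2) to show $f(W)\supseteq U\setminus\{b\}$ for each preimage component $W$, hence that $f^{-1}(U)$ is connected and unbounded, is correct; the least-$m$ collapse argument in item (5) and the derivations of (3) and (6) from (1) are fine.

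There are, however, two genuine gaps. First, your ``core'' is both deferred (so the proof is not self-contained, though in fairness neither is the paper's) and stated in a form whose one-line justification is false as plane topology: an unbounded component of $\{|F|\le M\}$ need \emph{not} carry an unbounded component of $\{|F|=M\}$ --- consider a chain of closed ``beads'' touching at level points and drifting to infinity, whose boundary components are all bounded curves. What the bov hypothesis actually yields (via the theorem that two points separated by a compact set in $S^{2}$ are separated by a \emph{component} of it, applied to $K=\{|F|=M\}\cup\{\infty\}$, after checking that level-set components are locally finite so the component of $\infty$ in $K$ is $\{\infty\}$, and that a bounded continuum cannot separate two unbounded connected sets) is the weaker pair: $\{|F|>M\}$ is connected and every component of the open set $\{|F|<M\}$ is bounded. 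This weaker core does suffice, but for a reason you must supply: once $\overline{D}\subset U$, the level set $\{|F|=M\}=f^{-1}(\partial D)$ lies in $f^{-1}(U)$, so the complement of $f^{-1}(U)$ and every Julia component sit inside $\{|F|<M\}$, which rules out the bead-chain where it matters. (You also silently shrink the bov disk into $U$; that smaller disks inherit the bov property is not automatic and is proved in \cite{bov}.) Second, in item (2) the phrase ``propagates infinite connectivity back through the proper maps $f$ (Riemann--Hurwitz)'' assumes properness of $f:U'_{j}\to U'_{j+1}$, which genuinely fails for unbounded components --- $e^{z}$ sends the simply connected strip $\{0<\mathrm{Re}\,z<\log 2\}$ onto the annulus $\{1<|w|<2\}$ non-properly, so connectivity does not pull back in general. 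Here properness holds precisely because item (1) forces all the components in the chain below $f^{-1}(U)$ to be bounded (a preimage component that is bounded maps properly, hence onto), and that step needs to be said explicitly rather than absorbed into ``Riemann--Hurwitz.''
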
  

We say a set $A$ surrounds another set $S$ if a bounded complementary component of $A$ contains $S$. 
The following two definitions appearing in ~\cite{relevantpole} are very important in analyzing the arrangement of Herman rings.
For a Herman ring $H$, $H_n$ denotes the Herman ring containing $f^n (H)$.
\begin{defn}[Innermost ring with respect to a set]
Given a Herman ring $H$, we say  $H_k$ is innermost with respect to a set $A$ if $H_k$ surrounds $A$ but does not surround any $H_i$  for  $ i \neq k$.
\end{defn}
 By taking $K$ as an $f^p$-invariant Jordan curve in a $p$-periodic Herman ring $H$ in Lemma~\ref{general1}, it is observed that $H_n$ surround a pole for some $n$. We choose $n$ to be the smallest such number. Further, if $H$ is taken to be the innermost Herman ring with respect to the bov then the first $n$ forward iterates of $H$ turns out to be crucial.
\begin{defn}
[Basic chain]Given a Herman ring $H$, the ordered set of rings $\{H_1, H_2, H_3,\dots, H_k\}$ is called the basic chain, where $H_1$ is the innermost ring with respect to the bov and $k$ is the smallest natural number such that $H_{k}$ surrounds a pole. The number $k$ is called the length of the basic chain.
\end{defn}
 The basic chain of a cycle is also referred as the basic chain of a Herman ring contained in the cycle. 
 
We collect some known facts about basic chains. Recall that $U$ denotes the Fatou component containing the bov. A finite sequence of rings $\{H_{j}, H_{j+1}, H_{j+2},\cdots, H_{j+m}\}$   is called a chain whenever $H_j$ surrounds $U$ but not any pole and $m$ is the smallest natural number such that $H_{j+m}$ surrounds a pole. 
\begin{lem}\label{basicchain}
	Let $f$ be a meromorphic function having a bov.
	\begin{enumerate}
		\item Every cycle of Herman rings has a unique basic chain.
		\item The length of every chain is less than or equal to that of the basic chain.
	 
		\item For every $p-$cycle of Herman rings, the length of the basic chain $l_C$ satisfies $2 \leq l_C \leq  p-1$.
	\end{enumerate}
\end{lem}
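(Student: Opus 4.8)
The plan is to describe every ring of the cycle by the bounded region cut out by an invariant curve, and to track how $f$ moves these regions. Fix an $f^{p}$-invariant Jordan curve in one ring and carry it around the cycle, obtaining curves $K_{0},\dots ,K_{p-1}$ with $K_{j}\subset H_{j}$ and $K_{j+p}=K_{j}$; let $D_{j}$ be the bounded component of $\widehat{\mathbb C}\setminus K_{j}$, so that $H_{j}$ surrounds the bov (resp.\ a pole) precisely when $D_{j}$ contains the bov (resp.\ a pole). I would first record two mechanisms. If $D_{j}$ contains no pole then $f$ is analytic on $D_{j}$ and, because $f$ is univalent on each ring (the rotation $f^{p}$ is a composition of the maps $f\colon H_{i}\to H_{i+1}$, so each factor is injective), the argument principle forces $f\colon D_{j}\to D_{j+1}$ to be a biholomorphism with $f(D_{j})=D_{j+1}$; hence nesting is preserved, $D_{i}\subset D_{i'}$ with $D_{i'}$ pole free giving $D_{i+1}\subset D_{i'+1}$. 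If $D_{j}$ does contain a pole then $f(D_{j})$ is connected, meets $\infty$, and has boundary inside $K_{j+1}$, so it is exactly the \emph{unbounded} component of $\widehat{\mathbb C}\setminus K_{j+1}$; that is, $f$ swaps the two sides of the invariant curve. The second mechanism is Lemma~\ref{general1}: some $H_{n}$ surrounds a pole and then $H_{n+1}$ surrounds the bov; conversely, if $H_{j}$ surrounded the bov while $D_{j-1}$ were pole free, then $b\in f(D_{j-1})=D_{j}$ would contradict $b$ being omitted. Thus $H_{j}$ surrounds the bov if and only if $H_{j-1}$ surrounds a pole, and in particular the bov- and pole-surrounding rings are equal in number.

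For (1) I would use that two disjoint Jordan curves surrounding the common point $b$ must be nested, so the bov-surrounding rings are totally ordered by inclusion of the $D_{j}$; at least one exists by the first mechanism, so this finite order has a unique minimum, which is the innermost ring $H_{1}$. The basic chain is then the forward orbit of $H_{1}$ up to the first pole-surrounding ring, and is therefore unique. For (2), given any chain $\{H_{1}',\dots ,H_{m}'\}$ with $H_{1}'$ bov surrounding, innermostness gives $D_{1}\subset D_{1}'$; for each $j<m$ the ring $H_{j}'$ is pole free, hence so is $D_{j}\subset D_{j}'$, so $H_{j}$ is not pole surrounding and the preserved nesting yields $D_{j+1}\subset D_{j+1}'$. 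Therefore the basic chain cannot end before step $m$, i.e.\ its length is at least that of any chain.

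For the two bounds in (3) I would argue as follows. The upper bound is a parity count: $f^{p}$ is conjugate to a rotation of an annulus and so maps each side of $K_{1}$ to itself, while reading $f^{p}$ around the cycle the number of side-swaps is, by the first mechanism, the number $s$ of pole-surrounding rings; hence $s$ is even, and since $s\ge 1$ we get $s\ge 2$. As the basic chain contains exactly one pole-surrounding ring, at least one lies outside it, so its length is at most $p-1$. For the lower bound I must show $H_{1}$ does not itself surround a pole. Suppose it surrounds a pole $w$. Since $H_{1}$ is innermost, every bov-surrounding ring has $w\in D_{j}$ and is therefore pole surrounding; combined with the equality of the two counts, the bov- and pole-surrounding rings coincide, and this common class is forward invariant under the (transitive) cyclic shift, hence is the whole cycle. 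So every ring surrounds both $b$ and $w$, all $D_{j}$ are nested about $b$, and by the first mechanism each step of $f$ reverses this nesting. The successor map is then an order-reversing bijection of a finite total order, so its square is the identity; thus $H_{j+2}=H_{j}$ and the period is at most two, contradicting the fact that Herman rings with an omitted value have period at least three~\cite{Nayak2016}. Hence $H_{1}$ is pure and the length is at least $2$.

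The step I expect to be the real obstacle is this last one, turning ``the innermost bov-surrounder also surrounds a pole'' into a rigid combinatorial constraint. It relies on the argument-principle identification of $f(D_{j})$ with a \emph{full} complementary component of $K_{j+1}$—so that there is no ``straddling'' and the side-swap bookkeeping is exact—and on the resulting order-reversal; both the parity count for the upper bound and the order-reversal contradiction for the lower bound stand or fall with that identification, after which the period-at-least-three input closes the argument.
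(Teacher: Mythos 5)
Your proof is correct, but it takes a genuinely different and much more self-contained route than the paper, whose own proof of parts (2) and (3) is essentially a list of citations: part (2) is quoted as Lemma 2.3 of \cite{relevantpole}, the lower bound in (3) is Remark 2.10 of \cite{small}, and the upper bound is deduced from Lemma 2.11 of \cite{small} (every Herman ring of a function with an omitted value has at least two relevant poles, while a basic chain of length $p$ would account for only one). You instead derive everything from a single dichotomy for $f$ on the Jordan domain $D_j$ bounded by an invariant curve: pole-free gives $f(D_j)=D_{j+1}$ bijectively, a pole gives $f(D_j)$ equal to the outer complementary component of $K_{j+1}$ --- though at that step you should say explicitly that it is Lemma~\ref{bounded} (the closure of the image of a bounded domain omits the bov) that rules out $f(D_j)\supseteq D_{j+1}$, since ``connected, meets $\infty$, boundary inside $K_{j+1}$'' alone does not. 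From this you get the equivalence ``$H_j$ surrounds the bov iff $H_{j-1}$ surrounds a pole'', the nesting-preservation induction for (2), a parity count of side-swaps in place of the two-relevant-poles lemma for the upper bound, and an order-reversal argument forcing $p\le 2$ in place of the cited remark for the lower bound; both your argument and the paper's ultimately rest on the external input that the period is at least three \cite{Nayak2016}. Two cosmetic points: the existence of a bov-surrounding ring comes from your \emph{second} mechanism (Lemma~\ref{general1}), not the first; and your ``innermost'' ring is the minimum of the nesting order on bov-surrounders, which is formally weaker than the paper's definition (which also forbids surrounding any other ring of the cycle), but that discrepancy is a defect of the paper's definition rather than of your argument.
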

\begin{proof}
	\begin{enumerate}
\item  This is evident from the definition of the basic chain.

\item This is  Lemma 2.3 of ~\cite{relevantpole}.

\item Since the innermost ring $H_1$ with respect to the bov never surrounds a pole by Remark 2.10 of \cite{small}, the length of the basic chain is at least two. 
If the length of the basic chain is equal to the period of the Herman ring then it follows from the definition of the basic chain that there is only one $H_1$-relevant pole, i.e., the total number of distinct poles surrounded by any of the Herman rings of the cycle is $1$. However the number of $H$-relevant poles of every Herman ring of a function with an omitted value is at least two by Lemma 2.11,~\cite{small}. Hence the length of the basic chain corresponding to a $p$-cycle of Herman rings is at most $p-1$.
	\end{enumerate}
\end{proof}
 We continue to reveal the connection of $U$, the  Fatou component containing the bov  to the possible Herman rings of the function.
For a cycle of Herman rings $C$ with  $l_C$ as the length of its basic chain, let $\mathcal{S}_C$ denote the set $\{U_1, U_2, U_3, \cdots, U_{l_C}\}$ where $U_1 =U$.

\begin{lem}\label{finiteBis}
	Let $f \in \mathcal{M}_S$ and $U$ be the Fatou component containing the bov.
	\begin{enumerate}
		\item If $H$ is a $p$-periodic Herman ring and the length of its basic chain is $l$ then for each $n$ there is an $i \in \{1, 2,3,\cdots, l\}$ such that  $H_n$ surrounds $U_i$ where $U_1=U$.
		\item Let $C'$ and $C''$ be two $p$-cycles of Herman rings. If $l_{C'} >l_{C''}$, $l_{C'} =l_{C''}$ or $ l_{C'} < l_{C''}$  then $\mathcal{S_{C'}} \supsetneq \mathcal{S_{C''}} $, $\mathcal{S_{C'}} = \mathcal{S_{C''}}$ or $\mathcal{S_{C'}} \subsetneq \mathcal{S_{C''}} $  respectively.

	\end{enumerate}

\end{lem}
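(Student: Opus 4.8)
The plan is to prove (1) by tracking, for each $n$, a single component of the bov-orbit that $H_n$ surrounds, and to bound its index through an ``increment--reset'' dynamics governed by Lemma~\ref{basicchain}(2). Write $B_n$ for the bounded complementary component (the inner disk) of the annulus $H_n$, so ``$H_n$ surrounds $A$'' means $A\subseteq B_n$; since $f$ maps the $p$-cycle injectively, $f\colon H_n\to H_{n+1}$ is a conformal isomorphism. I would first record two push-forward principles, using a core Jordan curve $\gamma\subset H_n$ and the bounded region $\Omega_\gamma$ it encloses. Principle (a): if $B_n$ contains no pole, then $f$ is analytic on $\overline{B_n}$, it carries $\partial B_n$ (the inner boundary) to $\partial B_{n+1}$, and since its image omits $\infty$ one gets $f(B_n)=B_{n+1}$; hence if $H_n$ surrounds $U_i$ then $f(U_i)\subseteq U_{i+1}$ lies in $B_{n+1}$, and as $U_{i+1}$ is connected, disjoint from $H_{n+1}$ and meets $B_{n+1}$, we conclude $H_{n+1}$ surrounds $U_{i+1}$. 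Principle (b): if $B_n$ contains a pole, then $f(\Omega_\gamma)$ contains a neighbourhood of $\infty$, so by Lemma~\ref{bounded} the bov lies outside $\overline{f(\Omega_\gamma)}$; therefore $f(\gamma)\subset H_{n+1}$ surrounds the bov, i.e.\ $H_{n+1}$ surrounds $U_1=U$. This is precisely the mechanism of Lemma~\ref{general1}.

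With (a) and (b) in hand the argument for (1) runs as follows. Since $H_1$ is innermost with respect to the bov it surrounds $U_1=U$, and none of $B_1,\dots,B_{l-1}$ contains a pole while $B_l$ does; so by (a) the rings $H_1,\dots,H_l$ surround $U_1,\dots,U_l$. Define $\sigma(1)=1$, and $\sigma(n+1)=\sigma(n)+1$ if $B_n$ has no pole, $\sigma(n+1)=1$ if $B_n$ has a pole; by (a),(b) the ring $H_n$ surrounds $U_{\sigma(n)}$ for every $n$. It remains to see $\sigma(n)\le l$. A maximal block of indices running from a ring with $\sigma=1$ (one surrounding $U$ but no pole) up to the next pole-surrounding ring is exactly a chain in the sense of the definition preceding Lemma~\ref{basicchain}, so by part (2) of that lemma it has length at most $l$. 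Hence if $\sigma(n)=l$ then $H_n$ must already surround a pole, for otherwise the block containing $n$ would be a chain of length exceeding $l$; thus $\sigma(n+1)=1$, and by induction $\sigma(n)\in\{1,\dots,l\}$ for all $n$, which is statement (1).

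For (2) the key observation is that $U_i$, the component containing $f^{i-1}(U)$, depends only on $f$ and $U$ and not on the cycle; hence $\mathcal{S}_C=\{U_1,\dots,U_{l_C}\}$ is an initial segment of one fixed sequence and depends on $C$ only through $l_C$. So $l_{C'}=l_{C''}$ gives $\mathcal{S}_{C'}=\mathcal{S}_{C''}$ at once, and $l_{C'}>l_{C''}$ gives $\mathcal{S}_{C''}\subseteq\mathcal{S}_{C'}$ at once; the entire content is the strict inclusion, which is equivalent to $U_1,\dots,U_{l_C}$ being pairwise distinct (so that $|\mathcal{S}_C|=l_C$; recall $l_C\ge 2$ by Lemma~\ref{basicchain}(3)). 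I would prove distinctness in two steps. First, $U_1$ differs from every $U_i$ with $i\ge 2$: by principle (a), $B_i=f(B_{i-1})\subseteq f(\mathbb{C})$ for $2\le i\le l_C$, whereas the bov $b\in U_1$ is omitted, so $b\notin f(\mathbb{C})\supseteq B_i\supseteq U_i$. Second, a coincidence $U_i=U_j$ with $i<j\le l_C$ can be pushed forward $l_C-j$ steps---an analytic operation, as no pole is met before index $l_C$---to give $U_{i'}=U_{l_C}$ with $i'<l_C$. Then $H_{i'}$ and $H_{l_C}$ both surround the common component, so $B_{i'}$ and $B_{l_C}$ are nested, and since $B_{l_C}$ contains a pole but $B_{i'}$ does not, the nesting must be $B_{i'}\subsetneq B_{l_C}$. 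Now $g:=f^{\,l_C-i'}$ maps $B_{i'}$ properly onto $B_{l_C}$ and maps the common Fatou component into itself, and comparing the hyperbolic metrics of $B_{i'}\subsetneq B_{l_C}$ via Schwarz--Pick forces that component to degenerate to a point, which is impossible.

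I expect this last step to be the main obstacle. Turning ``$B_{i'}\subsetneq B_{l_C}$ with $g$ proper onto and the common component $g$-invariant'' into a genuine degeneration requires that the inclusion $B_{i'}\hookrightarrow B_{l_C}$ be a \emph{strict} contraction on the relevant orbit, i.e.\ that the two Herman-ring boundaries $\partial B_{i'}$ and $\partial B_{l_C}$ not meet (equivalently $\overline{B_{i'}}\Subset B_{l_C}$), and it also requires handling the branched case $\deg g\ge 2$. These two points---which exploit the properness of $g$ together with the strict Schwarz--Pick inequality for the proper subdomain $B_{i'}\subsetneq B_{l_C}$---are where the careful analysis must be carried out; everything else in (2) is bookkeeping once (1) and the omitted-value observation are available.
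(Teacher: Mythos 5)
Your argument for part (1) is essentially the paper's own: the paper likewise shows that $H_i$ surrounds $U_i$ along the basic chain, that the index resets to $U_1$ once a pole is surrounded (via Lemma~\ref{general1} together with the fact that the bov lies in $U_1$, which is not a Herman ring by Lemma~\ref{nov}), and that each subsequent run of non-pole-surrounding rings is a chain whose length is bounded by $l$ via Lemma~\ref{basicchain}(2); your $\sigma$-bookkeeping is just a tidier phrasing of that induction, and your principles (a) and (b) are the Maximum Modulus / omitted-value facts the paper invokes. For part (2), the paper's proof consists solely of the observation that $\mathcal{S}_{C'}$ and $\mathcal{S}_{C''}$ are initial segments of the one fixed sequence $U_1, U_2, U_3,\dots$, followed by ``the proof now follows''; it never addresses distinctness of the $U_i$. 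You are right that the strict inclusions genuinely require it, and your omitted-value argument for $U_1 \neq U_i$ when $i \geq 2$ is sound.

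The gap is in your last step, and it is a real one: from ``$g=f^{\,l_C-i'}$ maps $B_{i'}$ properly onto $B_{l_C}\supsetneq B_{i'}$ and $g(U_{i'})\subseteq U_{i'}$'' no Schwarz--Pick comparison can force $U_{i'}$ to degenerate. A proper map onto a strictly larger domain can leave a nondegenerate open set invariant: $g(z)=2z^2$ maps $D_{1/\sqrt{2}}(0)$ properly onto $D_{1}(0)$ and fixes $D_{1/2}(0)$ setwise. The inequality Schwarz--Pick gives you, $d_{B_{l_C}}(g(x),g(y))\leq d_{B_{i'}}(x,y)$, measures the image in the metric of the \emph{larger} domain, which is dominated by $d_{B_{i'}}$ there, so the strict inclusion $B_{i'}\subsetneq B_{l_C}$ pushes the estimate in the wrong direction and nothing can be iterated into a contraction of $U_{i'}$ in a single fixed metric. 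Ruling out $U_i=U_j$ for $2\leq i<j\leq l_C$ would need different input (for instance, analyzing which type of periodic Fatou component $U$ could then be preperiodic to, or exploiting expansion on $\partial U_{i'}\subseteq \mathcal{J}(f)$). Note, though, that the only consequence of part (2) used later (in the proof of Theorem~\ref{pperiodicHR}) is the non-strict inclusion $\mathcal{S}_{C''}\subseteq \mathcal{S}_{C'}$, which both you and the paper do establish; the unproved strictness is exactly the point on which the paper itself is silent.
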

\begin{proof} 
	\begin{enumerate}
		\item 
		
		Let $\{H=H_1,~H_2,..,H_p\}$ be a $p-$cycle  of Herman rings and $H_1$ be the innermost ring with respect to $U$. Since the length of the basic chain is $l$,  $H_i$ surrounds $U_i$  for $i=1,2,...,l$. Further, $H_l$ surrounds a pole by the definition of the basic chain. By Lemma~\ref{general1}, $H_{l+1}$ surrounds  the bov. But the bov is in $U=U_1$ which is not a Herman ring by Lemma~\ref{nov}. Hence $H_{l+1}$ surrounds $U_1$. 
		
		If $H_{l+1}$ surrounds a pole then there is an $l' \geq 1$ such that $H_{l+1+l'}$ surrounds $U_1$ but not any pole. If $k$ is the smallest natural number for which $H_{l+1+l'+k}$ surrounds a pole then it follows from the Maximum Modulus Principle  that $H_{l+1+l'+j}$ surrounds $U_j$ for all $j  \leq k$. Note that $\{H_{l+1+l'+j}: 1 \leq j \leq k\}$ is a chain.  By  Lemma~\ref{basicchain}(2), the basic chain is the longest chain. In other words,  $k \leq l$. Now $H_{l+1+l'+k+1}$ surrounds $U_1$ by Lemma~\ref{general1}. This argument can be continued with $H_{l+1+l'+k+1}$ instead of $H_{l+1}$ for finitely many times to complete the proof.
		
		\item  Each cycle of Herman rings contains a ring which is the  innermost with respect to the bov. Let $H_1$ and $G_1$ be such innermost rings of the $p$-cycles $C'$ and $C''$ respectively. Both $H_1$ and $G_1$ surround $U_1$. Using Lemma \ref{finiteBis}(1), we have $\mathcal{S_{C'}}=\{U_1  ,~U_2,~U_3,\dots,~U_{l_{C'}}\}$ and $\mathcal{S_{C''}}=\{U_1,~U_2,~U_3,\dots,~U_{l_{C''}}\}$. The proof now follows.

	\end{enumerate} 
	
\end{proof}

\section{Proofs of the results}
Here is the proof of the first result of this article.

\begin{proof}[Proof of Theorem~\ref{pperiodicHR}]
If $f$ has no Herman ring then there is nothing to prove. Else $U$, the Fatou component containing the bov  is bounded by Lemma~\ref{singleton} (6)).  Further  $f^{-1}(U)$ is unbounded, infinitely connected and all its complementary components are bounded by Lemma \ref{singleton}(1). We write $ \mathbb{C} \setminus f^{-1}(U) =\cup_{i=1} ^{\infty} B_i$ and choose $B_1$  such that it contains $U_1$.
\par 
 Suppose on the contrary that $\{C_n\}_{n>0}$ is the set of all the $p-$cycles of Herman rings.  Let $l_n$ be the length of the basic chain of $C_n$. Since $ l_n \leq p-1$ for all $n$ by Lemma~\ref{basicchain}(3), $\max \{l_n: n >0\} =l$ is a finite number. If $C$ is a $p-$cycle of Herman rings with $l_C =l$ then  $ \mathcal{S}_{C_n} \subseteq \mathcal{S}_{C}  $ for all $n$ by Lemma \ref{finiteBis}(2). Recall that $\mathcal{S}_C$ denotes the set $\{U_1, U_2, U_3, \cdots, U_{l_C}\}$ where $U_1 =U$. Here more than one cycle with maximum length of basic chain  are not ruled out and for each such $C$, $\mathcal{S}_C$ is the same set. Let $K=\cup\{B_i:~B_i~\mbox{contains at least one element of}~\mathcal{S}_{C}\}$. Every $p-$periodic Herman ring belongs to  $C_n$  for some $n$ and therefore surrounds an element of $\mathcal{S}_{C_n}$ by Lemma~\ref{finiteBis}. Since every Herman ring is different from $f^{-1}(U_1)$ and $\mathcal{S}_{C_n} \subseteq \mathcal{S}_{C} $, every $p-$periodic Herman ring is in $K$. In other words, for each $p$ there is a compact set $K$ such that all the $p-$periodic Herman rings are contained in $K$.
 \par

%

Each $C_n$ contains a ring which is innermost with respect to the bov. Let such a ring be denoted by $H^n _1$. If required, after passing to a subsequence we can find a sequence of innermost rings $\{H_1 ^{n}\}_{n>0}$ such that either $H_{1}^{n+1}$ surrounds $H_{1}^{n}$ for all $n$, or $H_{1}^{n+1}$ is surrounded by $H_{1}^{n}$ for all $n$. Let $A_n$ be a topological annulus bounded by two $f^p -$invariant Jordan curves, one contained in $H_1 ^{n}$ and the other in  $H_{1}^{n+1}$. Without loss of generality we assume that $A_i \cap A_j = \emptyset$ for $i \neq j$.  Observe that $A_n \subset K$ for all $n$.

\par  If $f^p: A_n \to \mathbb{C}$ is analytic then $f^{kp}(A_n)=A_n$ for all $k$ and  $\{f^{kp} \}_{k>0}$ becomes normal in $A_n$. But this is not possible as $A_n$ intersects the Julia set of $f$. Hence   $f^p$ has a singularity in $A_n$. Each such singularity $z$ must satisfy $f^k (z)=\infty $ for some $1 \leq k \leq p$. Since no innermost ring (with respect to the bov) surrounds  a pole, each singularity of $f^p$ in $A_n$ must be an element of  $\{z: f^k (z)=\infty, 1 <  k \leq p\}$.  This means that there is an integer $k'$, $1 < k' \leq p$ such that  $f^{k'}$ has a pole $w_n$ in $A_n$ for infinitely many values of $n$. Since $w_n \in B_1$ for all $n$, $\{w_n\}_{n>0}$ has a limit point say $w$. Without loss of generality assume $w_n \to w$ as $n \to \infty$. Thus $w$ is an essential singularity of $f^{k'}$. In other words,  there exists an $l <k'$ such that $f^l(w) = \infty$.  Hence $f^l(w_n) \rightarrow \infty$ as $n \to \infty$. The line segment  joining $w_n$ and $w_{n+1}$ contains at least one point $z_n$ of $H_{1}^n$. Since each $A_n$ surrounds the bov and $w_n \to w$, $z_n \to w$ as $n \to \infty$. This implies that $f^l(z_n) \rightarrow \infty$ as $n \to \infty$. In other words, there is an unbounded sequence of $p-$periodic Herman rings, namely $\{f^l (H_1 ^n) \}_{n>0}$.   But this is not possible as all such rings are contained in the bounded set $K$. Thus the number of $p-$cycles of Herman rings is finite.

\end{proof}

\begin{rem}
	In the proof, it is important to note that $K$ contains all the Herman rings whose length of the basic chain is $l$ irrespective of the periods of the Herman ring. This gives rise to a slightly more generalized version of Theorem~\ref{pperiodicHR}; for a given $l$, the number of Herman rings (of a function with stable bov), the length of whose basic chains is $l$,  is finite.
\end{rem}

  We need a definition to prove Theorem~\ref{commonboundarypoint}.
The outer (or inner) boundary of a Herman ring $H$ is the boundary of the unbounded (bounded respectively) component of $\widehat{\mathbb{C}} \setminus H$. By saying a Herman ring surrounds a pole we mean that its inner boundary surrounds the pole.

\begin{proof}[Proof of Theorem~\ref{commonboundarypoint}]
 	 
	First we make two useful observations on   the Julia components meeting the boundary of a Herman ring. Let $J$ be a Julia component intersecting the boundaries (inner or outer) of more than one Herman ring. Note that a Julia component cannot intersect both the boundaries of a Herman ring.
	\begin{enumerate}
		\item  If  two boundary components of two Herman rings intersect $J$ then both  of these   cannot be the inner boundaries of the respective Herman rings.  In other words, all the boundary components  of Herman rings contained in $J$ are outer, with a possible exception. 
	\label{allouter} 
	 \item Two Herman rings whose outer boundaries are contained in a  Julia component cannot surround a common pole. As mentioned earlier, this means that the inner boundaries of these Herman rings cannot surround the same pole.
	 \label{nonnested}
	\end{enumerate}
In order to prove this theorem by the method of contradiction, suppose that $J$ contains the boundary components of infinitely many Herman rings. By the observation~(\ref{allouter}) above, all except possibly one such boundary components are outer. Let $\{J_n\}_{n>0}$ be the sequence of such outer boundaries of distinct Herman rings $H^n$. Since $J$ is bounded, the number of all poles surrounded by some sub-continuum of $J$ is finite. By the observation  (\ref{nonnested}) above, the Herman ring $H^n$ does not surround any pole for infinitely many values of $n$. Without loss of generality we assume that $H^n$ does not surround any pole for any $n$.
\par 
For each $n$, there is a $k_n$ such that $f^{i}(H^n)$ does not surround any pole for $0 \leq i< k_n$ but $f^{k_n}(H^n)$ surrounds a pole, by Lemma~\ref{general1}. The outer boundary of $f^{k_n}(H^n)$ is the $f^{k_n}-$image of the outer boundary of $H^n$ by the Maximum Modulus Principle. Since each point of the backward orbit of $\infty$ is a singleton Julia component, and in particular does not intersect the boundary of any Herman ring, $f^{k}(J)$ is bounded for each $k$.  If $p =\min \{p(n): H^n~\mbox{is}~p(n)\mbox{-periodic}\}$ then  $p \geq 3$ and $f^p(J) \subseteq J$. Consequently there is a Julia component $J^* \in \{J, f(J), f^2(J),\cdots, f^{p-1}(J)\}$ containing the outer boundary of $f^{k_n}(H^n)$ for infinitely many values of $n$. Since $J^*$ is bounded, the number of all poles surrounded by any of its sub-continuum is finite. Hence one pole is surrounded by at least two Herman rings (their inner boundaries). But  this is not possible by the observation~(\ref{nonnested}), leading to a contradiction.  

\end{proof}
We now present the proofs of Theorem~\ref{repelling} and ~\ref{boundary}.
\begin{proof}[Proof of Theorem~\ref{repelling}]
	Let $U$ be the Fatou component containing the bov and $U_{-1}=f^{-1}(U)$ be the pre-image of $U$. It follows from Lemma \ref{singleton} that $U_{-1}$ is infinitely connected and all its complementary components are bounded. Let $\{B_i\}_{i=1}^\infty$ be the set of all such components. Note that functions with a bov has infinitely many poles, otherwise $\infty$ will be an asymptotic value, which is not true as bov is the only asymptotic value (Lemma \ref{critically infinite}) of the function. Thus $B_j$ contains at least one pole for infinitely many values of $j$.
	Let $ \{\gamma_j\}_{j>0} $ be an  infinite sequence of Jordan curves in $U_{-1}$ such that $ \gamma_j $ surrounds $B_j$ but not any other complementary component of $U_{-1}$. Note that no $\gamma_j$ contains any pole  and at most one $\gamma_j$ surrounds the bov. Then by Corollary $2.9$, \cite{Baranski}, $f$ has infinitely many weakly repelling fixed points. Since bov is the only limit point of the critical values, there are finitely many critical values which qualify to be in some parabolic domain. This gives that $f$ has at most finitely many parabolic periodic points, and in particular finitely many parabolic fixed points. Hence $f$ has infinitely many repelling fixed points.
\end{proof}

The main idea of the next proof is that the branch of $f^{-p}$ at a repelling $p$-periodic point of $f$ is different from the branch of $f^{-p}$ fixing the rotaion domain which contains the periodic point on its boundary. 

\begin{proof}[Proof of Theorem~\ref{boundary}]
	Since $z_0$ is a repelling $p$-periodic point, $|(f^p)'(z_0)|>1$.  Let $g$ be the inverse branch of $f^p$ defined on a neighborhood $N$ of $z_0$ such that $g(z_0)=z_0$. Since $|(f^{-p})'(z_0)|<1$ , $z_0$ be an attracting fixed point of $g$. Let $N' \subset N$ be a neighbourhood of $z_0$ such that  $g(N')\subset N'$. The existence of such $N'$ is evident from the fact that  analytic functions are locally conformally conjugate to linear maps at their attracting fixed points. As $z_0 \in \partial D$, there exists $z \in D \cap  N' $. Define $z_{n}$=$g^n(z)$ for $n \geq 1$ such that $z_n \in N'$. Clearly $z_{n} \rightarrow z_0$ as $n \rightarrow \infty$.  Let $\gamma$ be an $f^p -$invariant Jordan curve contained in $ D$ which contains $z$.  If $z_n \in D$ for infinitely many values of $n$ then each such $z_n$ must be on $\gamma$. But $\gamma$ is at a positive distance from $z_0$ contradicting $z_n \to z_0$ as $n \to \infty$. Thus, there is a natural number $n_0$ such that $z_{n_0} \in D$ but   $z_{n} \notin D$ for all $n > n_0$. Now $z_{n_0 +1} \in g(D)$ and $g(D) \cap D =\emptyset$. If $g^{2}(D) \cap g(D) \neq \emptyset$ or $g^{2}(D) \cap  D  \neq \emptyset $ then applying $f^p $ on these sets we get $D \cap g(D) \neq \emptyset$,  which is just shown to be impossible. Inductively, it can be shown that $g^{k}(D) \cap g^{i}(D) =\emptyset$ for all $0 \leq i \leq k-1$. Denoting  $g^{k}(D)$ by $D_{-k}$, it is seen that $z_0 \in \partial D_{-k}$ for all $k$. The proof completes.  
\end{proof}

 The proof of Theorem~\ref{totallydisconnectedness} is based on some ideas developed in~\cite{mane}.    We start by stating two lemmas proved in the same paper. We replace the unit disk by $D_{R}(a)$ and $D_r (0)$ by $D_{rR}(a)$ in the original form of Lemma 2.1 (\cite{mane}). This is not any loss of generality. A hyperbolic domain is an open connected subset of $\widehat{\mathbb{C}}$ whose complement contains at least three points. If a non-constant function $h$ is analytic at a point $z_0$ then it is locally conjugate to a monomial $z \mapsto z^m$ for some $m >0$ at $z_0$. This $m$ is known as the local degree of $h$ at $z_0$. If $m>1$ then $z_0$ is a critical point of $h$ and its multiplicity is $m-1$.  For two domains $A$ and $B$, a map $h: A \to B$ is called proper of degree $k$ if  for each $b \in B$ the number of pre-images of $b$ in $A$ counting multiplicities is $k$.
\begin{lem}\label{diameterzero}
	For every natural number $d$ and $r \in (0,1)$, there exists $C(d,r)>0$ such that for a given simply connected hyperbolic domain $V$ and a proper analytic map $g:V \rightarrow D_{R} (a)$ of degree at most $d$, each component of $g^{-1}(\overline{D_{rR}(a)})$ has diameter less than $C(d,r)$ with respect to the hyperbolic metric of $V$. Moreover, $ lim_{r\to0} \text{C(d,r)}=0$.
\end{lem}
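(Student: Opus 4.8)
The plan is to reduce the statement to the model case of the unit disk $\mathbb{D}$ together with a finite Blaschke product, where an explicit estimate is available. First I would apply the affine isomorphism $T(w)=(w-a)/R$, which carries $D_R(a)$ conformally onto $\mathbb{D}$ and $\overline{D_{rR}(a)}$ onto $\overline{D_r(0)}$. Since $T$ is a bijection it does not alter the components of the preimage, and the hyperbolic metric of $V$ is an intrinsic object, unaffected by how the target is coordinatized; hence it suffices to treat a proper analytic $g:V\to\mathbb{D}$ of degree at most $d$ and to bound the hyperbolic diameter of the components of $g^{-1}(\overline{D_r(0)})$. At this point the statement is exactly the original form of Lemma 2.1 in \cite{mane}, so one legitimate route is simply to cite it; but since this is where the content lies, I would reprove the estimate as follows. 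Letting $\pi:\mathbb{D}\to V$ be a Riemann map, $\pi$ is an isometry from the standard hyperbolic metric on $\mathbb{D}$ onto that of $V$, and $B:=g\circ\pi:\mathbb{D}\to\mathbb{D}$ is a proper holomorphic self-map of the disk of degree $k\le d$, hence a finite Blaschke product. The components of $g^{-1}(\overline{D_r(0)})$ correspond under $\pi$ to those of $B^{-1}(\overline{D_r(0)})$ with hyperbolic diameters preserved, so the whole problem collapses to estimating the latter for $B$.

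For the model estimate I would write $B(z)=e^{i\theta}\prod_{j=1}^{k} b_{a_j}(z)$ with $b_a(z)=\frac{z-a}{1-\bar a z}$, so that $|b_a(z)|$ is precisely the pseudohyperbolic distance from $z$ to $a$. If $|B(z)|\le r$ then $\prod_{j=1}^k|b_{a_j}(z)|\le r$, and since each factor lies in $[0,1]$, at least one satisfies $|b_{a_j}(z)|\le r^{1/k}\le r^{1/d}$ (the last inequality because $r\in(0,1)$ and $k\le d$). Thus $B^{-1}(\overline{D_r(0)})$ is contained in the union of at most $d$ pseudohyperbolic balls, one about each zero $a_j$, each of pseudohyperbolic radius $\le r^{1/d}$. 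Such a ball is a genuine hyperbolic disk whose hyperbolic diameter equals $\eta(r):=2\tanh^{-1}(r^{1/d})$, which tends to $0$ as $r\to0$.

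Finally I would convert this pointwise covering into a diameter bound on a single component $W$. Since $W$ is connected and contained in the union of at most $d$ such balls, it lies inside one connected component of that union; chaining through the overlaps of consecutive balls via the triangle inequality, with at most $d$ balls to traverse, gives that the hyperbolic diameter of $W$ is at most $d\,\eta(r)$. Setting $C(d,r)=d\,\eta(r)=2d\tanh^{-1}(r^{1/d})$ then yields both the bound and the limit $\lim_{r\to0}C(d,r)=0$. I expect this chaining step to be the main point requiring care: a single component can meet several of the balls, and the uniformity of the bound hinges precisely on there being at most $d$ of them, so the argument must proceed through the intersection pattern of the balls rather than along paths inside $W$, whose length is not controlled a priori. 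The only other thing to verify carefully is that $g\circ\pi$ is genuinely proper of degree at most $d$ (so that it is a Blaschke product), which follows since properness is preserved under composition with the biholomorphisms $\pi$ and $T$.
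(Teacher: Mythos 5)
Your argument is correct, but it is worth noting that the paper does not prove this lemma at all: it is quoted verbatim (up to the affine rescaling $D_R(a)\leftrightarrow\mathbb{D}$, which the authors dismiss in one line exactly as you do) from Lemma~2.1 of Shishikura--Tan \cite{mane}. So what you have produced is a self-contained proof of the cited black box, and it is essentially the standard one: Riemann map to reduce to a finite Blaschke product $B$ of degree $k\le d$, the pigeonhole estimate $\min_j |b_{a_j}(z)|\le r^{1/k}\le r^{1/d}$ on the factors, covering $B^{-1}(\overline{D_r(0)})$ by at most $d$ pseudohyperbolic balls of radius $r^{1/d}$, and then chaining through the intersection graph of the balls rather than along paths in the component --- you are right that this last step is where the care is needed, and your handling of it (a connected set inside a union of $\le d$ connected sets of hyperbolic diameter $\le\eta$ has diameter $\le d\eta$) is sound. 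Two small points: the hyperbolic diameter of a pseudohyperbolic ball of radius $\rho$ is $4\tanh^{-1}(\rho)$ in the curvature $-1$ normalization (your $2\tanh^{-1}(\rho)$ corresponds to the other common normalization of the metric), which only changes the explicit constant and not the conclusion $\lim_{r\to 0}C(d,r)=0$; and your explicit $C(d,r)=2d\tanh^{-1}(r^{1/d})$ is in fact sharper and more concrete than the unspecified constant in the statement. Your route buys a quantitative, citation-free proof; the paper's route buys brevity by outsourcing the content to \cite{mane}.
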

For a hyperbolic domain $A$ and $A' \subset A$, the  diameter of $A'$ with respect to the hyperbolic distance of $A$ is denoted by $diam_{A} A'$. Let $|A|_s$ denote the spherical diameter of $A$.
\begin{lem}\label{sphericaldiameter}
	Let $W$ be a simply connected domain and $a \in W' \subset W  \subset \Omega \subset \mathbb{C} \setminus \{0\}$ for two domains $W'$ and $\Omega$. If  $diam_{W} W' \leq C$ then   $|W'|_s \leq 2 C'(e^{2C}-1)   \inf \{d(a, \partial \Omega),\frac{1}{|a|}\}$ where $C'$ is a universal constant.
\end{lem}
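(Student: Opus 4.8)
The plan is to deduce the spherical estimate from a Euclidean one furnished by the Koebe distortion theorem, and then to recover the factor $\frac{1}{\abs{a}}$ by exploiting the invariance of the spherical metric under the inversion $z\mapsto 1/z$. Throughout I work with the chordal distance $\chi(z,w)=\frac{2\abs{z-w}}{\sqrt{(1+\abs{z}^2)(1+\abs{w}^2)}}$, which is comparable to the spherical distance up to a universal factor, so bounding $\chi$ bounds $\abs{W'}_s$ with the loss absorbed into $C'$.

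First I would observe that, since the denominator of $\chi$ is at least $1$, one has $\chi(z,w)\le 2\abs{z-w}$; hence it suffices to control the Euclidean displacement $\abs{a-w}$ for $w\in W'$. Because $W$ is simply connected and omits $0$ (as $W\subset\Omega\subset\mathbb{C}\setminus\{0\}$), its hyperbolic density obeys the standard Koebe lower bound $\rho_W(z)\ge\frac{1}{2\,d(z,\partial W)}$. As $d(\cdot,\partial W)$ is $1$-Lipschitz, integrating this inequality along a hyperbolic near-geodesic from $a$ to $w$ of length at most $diam_W W'\le C$ gives $\tfrac12\log\frac{d(a,\partial W)+\ell}{d(a,\partial W)}\le C$, whence $\abs{a-w}\le\ell\le d(a,\partial W)\,(e^{2C}-1)\le d(a,\partial\Omega)\,(e^{2C}-1)$, the final step using $W\subset\Omega$. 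Therefore $\chi(a,w)\le 2(e^{2C}-1)\,d(a,\partial\Omega)$ for every $w\in W'$.

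To produce the complementary bound $2(e^{2C}-1)/\abs{a}$ I would apply $\phi(z)=1/z$. Since $0\notin W$, $\phi$ is conformal on $W$, so $\widetilde W=\phi(W)$ is again simply connected and hyperbolic with $d_{\widetilde W}=d_W$; consequently $\phi(W')$ still has hyperbolic diameter at most $C$, and $\widetilde a=1/a\in\phi(W')$. Crucially $\chi$ is invariant under $\phi$, so $\chi(a,w)=\chi(\widetilde a,\widetilde w)$. Repeating the previous paragraph inside $\widetilde W$ yields $\chi(\widetilde a,\widetilde w)\le 2(e^{2C}-1)\,d(\widetilde a,\partial\widetilde W)$, and since $\infty\notin W$ forces $0\notin\widetilde W$, we have $d(\widetilde a,\partial\widetilde W)\le\abs{\widetilde a-0}=1/\abs{a}$. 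Combining the two estimates gives $\chi(a,w)\le 2(e^{2C}-1)\inf\{d(a,\partial\Omega),1/\abs{a}\}$ for all $w\in W'$, and the triangle inequality $\chi(x,y)\le\chi(x,a)+\chi(a,y)$ upgrades this to $\abs{W'}_s\le 4(e^{2C}-1)\inf\{d(a,\partial\Omega),1/\abs{a}\}$, which is the assertion with $C'=2$.

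The step I expect to be the main obstacle is obtaining the $1/\abs{a}$ term with a constant independent of $C$. A direct conversion of the Euclidean diameter bound into a chordal one divides by $\sqrt{1+\abs{a}^2}$ and then requires a lower bound on $\abs{w}$ over $W'$; that route leaks a spurious factor $e^{2C}$ into the constant and fails to close precisely when $a$ is large and far from $\partial\Omega$. The inversion device circumvents this by turning the point $\infty$ (which $W$ avoids) into the point $0$ (which $\widetilde W$ avoids), so that the single inequality $d(\widetilde a,\partial\widetilde W)\le 1/\abs{a}$ supplies the second half of the infimum. The only points demanding care are the verifications that $\phi$ simultaneously preserves simple connectivity, the hyperbolic distance, and the chordal metric, all of which are standard once $0\notin W$ is in hand.
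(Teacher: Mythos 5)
The paper does not actually prove this lemma: it is imported verbatim from Shishikura--Tan \cite{mane}, so there is no in-paper argument to compare against, and your proposal must be judged on its own. It is correct and is essentially the standard proof of that lemma: the Koebe density bound $\rho_W(z)\ge \frac{1}{2\,d(z,\partial W)}$ for a simply connected hyperbolic domain, integrated along a hyperbolic near-geodesic and combined with the $1$-Lipschitz property of $d(\cdot,\partial W)$, gives the Euclidean estimate $|a-w|\le (e^{2C}-1)\,d(a,\partial W)\le (e^{2C}-1)\,d(a,\partial\Omega)$; the inversion $z\mapsto 1/z$ (legitimate because $0\notin W$, and preserving simple connectivity, the hyperbolic metric and the chordal metric) converts the unused hypothesis $\infty\notin W$ into $0\notin \widetilde W$ and yields the complementary bound $2(e^{2C}-1)/|a|$; and the triangle inequality at $a$ upgrades the pointwise bounds to the stated diameter bound with a universal $C'$ (your $C'=2$ for the chordal metric, a further universal factor if $|\cdot|_s$ means arc-length spherical distance, which the statement absorbs into $C'$). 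The only normalization-dependent point is the exponent: $e^{2C}$ corresponds to the curvature $-1$ density with $\rho_{\mathbb D}(0)=2$, which is exactly the convention your lower bound $\rho_W\ge 1/(2d)$ encodes, so your constants are consistent with the statement.
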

The next lemma deals with the pre-image component of simply connected domains containing exactly one critical value under proper maps. Though the proof seems to be well-known, no reference is known to the authors and a proof is given.
\begin{lem}\label{simplyconnectedlema}
	Let $h:A \rightarrow B$ be a proper analytic map such that $B$ is simply connected and contains at most one critical value of $h$, then $A$ is simply connected. 
\end{lem}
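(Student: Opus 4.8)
The plan is to reduce the whole statement to a single computation of the Euler characteristic of $A$, exploiting the elementary fact that a connected open subset $A$ of $\widehat{\mathbb{C}}$ is simply connected exactly when its complement is connected, equivalently when $\chi(A)=1$. Since $A$ is a pre-image component it is connected, and by hypothesis $h:A\to B$ is proper of some finite degree $k$ with at most one critical value $v$ in $B$. Because $B$ is simply connected and (being a disk in the intended applications) omits at least one point of $\widehat{\mathbb{C}}$, I take $\chi(B)=1$ and $B\neq\widehat{\mathbb{C}}$. First I would dispose of the trivial case in which $h$ has no critical value: then $h$ restricts to an unbranched covering onto the simply connected $B$, forcing $k=1$, so $h$ is a conformal isomorphism and $A\cong B$ is simply connected.

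In the main case $h$ has exactly one critical value $v$, and I would compute $\chi(A)$ in two equivalent ways. The cleanest is the covering-space route: after deleting the fibre over $v$, the restriction $h:A\setminus h^{-1}(v)\to B\setminus\{v\}$ is a proper local homeomorphism, hence a covering of degree $k$. Removing finitely many points keeps $A$ connected, so this is a \emph{connected} $k$-fold cover of $B\setminus\{v\}$; since $B$ is simply connected, $B\setminus\{v\}$ is homotopy equivalent to a circle, and the connected finite covers of a circle are again circles, whence $A\setminus h^{-1}(v)$ is homotopy equivalent to a circle and $\chi(A\setminus h^{-1}(v))=0$. Writing $m=\#h^{-1}(v)$ and using that deleting a point lowers the Euler characteristic by one, I obtain $\chi(A)=\chi(A\setminus h^{-1}(v))+m=m$. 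The same value drops out of the Riemann--Hurwitz formula $\chi(A)=k\chi(B)-\sum_{z}(\deg_z h-1)$: the only ramification sits over $v$, where the local degrees sum to $k$ across $m$ distinct points and contribute total ramification $k-m$, so $\chi(A)=k\cdot 1-(k-m)=m$.

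Finally I would combine $\chi(A)=m$ with a crude topological bound. Because $h$ is a nonconstant proper, hence open, map with $h(A)=B\neq\widehat{\mathbb{C}}$, the domain $A$ cannot be all of $\widehat{\mathbb{C}}$, so its complement is nonempty and $\chi(A)=2-(\text{number of complementary components})\leq 1$. Since $v$ is a genuine critical value, $m\geq 1$, that is $\chi(A)=m\geq 1$. The two inequalities force $\chi(A)=m=1$, so the complement of $A$ is connected and $A$ is simply connected. I expect the main obstacle to be the careful bookkeeping of Euler characteristics for open (bordered) planar surfaces and the justification that $A$ has finite topological type so these characteristics are well defined; the covering-space computation is attractive precisely because it yields $\chi(A)=m$ directly and thereby delivers finiteness of type as a by-product, avoiding any a priori control on the number of boundary components of $A$.
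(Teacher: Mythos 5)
Your proof is correct and follows essentially the same route as the paper's: both reduce to the Riemann--Hurwitz count (your $\chi(A)=m$ is exactly the paper's $c(A)=2-m$, with $m$ the cardinality of the fibre over the critical value) combined with the observation that $A$, being a proper subdomain of $\widehat{\mathbb{C}}$, has nonempty complement, which forces $m=1$. Your covering-space derivation of $\chi(A)=m$ is a welcome supplement that also settles the finite-type question, and your uniform treatment of the whole fibre over $v$ cleanly bypasses the paper's under-justified intermediate assertion that a single critical point must have local degree equal to the degree of $h$.
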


\begin{proof}
	First note that, by the Riemann-Hurwitz formula (Theorem 5.4.1, ~\cite{Beardon}), $c(A)-2=d(c(B)-2)+N$, where $d$ is the degree of $h$ and $N$ is the number of critical points of $h$ in $A$ counting multiplicity. Since $B$ is simply connected, $c(A)=2-d+N$.
	If $B$ does not contain any critical value then $A$ does not contain any critical point and $h$ is one-one. In other words, $A$ is simply connected. If $B$ contains a critical value then it is the only critical value of $h$ in $B$, by the hypothesis of this lemma. This gives that $c(A)=2-d+N$. If $A$ contains only one critical point then the local degree of $h$ at the critical point is $d$ and hence $N=d-1$. Thus $c(A)=1$. Note that $A$ cannot contain more than one critical point, because that would give $ d - N \geq 2$ (as  all the critical points correspond to the same critical value and the sum of the local degrees at all these critical points is $d$). Consequently $c(A) \leq 0$, which is impossible.
\end{proof} 

Now we proceed to prove Theorem \ref{totallydisconnectedness}. For a proper map $g: A \to B$, let $\deg(g: A \to B)$ denote its degree.

\begin{proof}[Proof of Theorem~\ref{totallydisconnectedness}] Let $f$   satisfy all the hypotheses of the theorem. Suppose that $w \in \mathbb{C}$ is an accumulation point of $J$, i.e., there exists a sequence $n_k \in \mathbb{N}$ such that $\lim_{k \to \infty}f^{n_k}(J)=w$. Here, the limit is with respect to the Hausdorff metric. 
\par 
Let $U$ be the Fatou component containing the bov. Let $ K^0$ be the closure of a simply connected subset of $U$ containing all the critical values belonging to $U$ and the bov. By the first assumption, the number of   Fatou components different from $U$ and  containing some critical value is finite. Let $\{U^i, 1 \leq i \leq N \}$ be the set of all such  Fatou components. Consider the closure $K^i$ of a simply connected domain in $U^i$ containing all the critical values in $U^i$. Set \begin{equation} B = \overline{\cup_{i=1}^{N} \cup_{k \geq 0} f^k(K^i)}.
\label{B}\end{equation} 
Then $B$ is a forward invariant  compact subset of the union of $\mathcal{F}(f)$ and $  \{z: z ~\mbox{is a parabolic periodic 
	 point of }~f\}$. It is important to note here that $f \in \mathcal{M}_S$ has no Baker domain by Theorem~3.4 ~\cite{bov-1}.
 
 Note that $\widehat{\mathbb{C}} \setminus B$ is a backward invariant open set containing all the critical points of $f$ belonging to the Julia set. The critical points belonging to the Fatou set may be in $\widehat{\mathbb{C}} \setminus B$, but this does not matter when we discuss pull backs of disks centered at a point of the Julia set that is not a parabolic periodic point. More precisely, the forward orbits of these critical points cannot accumulate at any point of the Julia set.  In view of the assumption, let $NC=\{c_1, ~c_2,\dots ~c_k\}$ be the set of all non-recurrent critical points of $f$ in the Julia set. Also, let $deg(f,c_i)$ denote the local degree of $f$ at $c_i$ and  $d = \prod_{i=1}^{k} deg(f,c_i)$. Let  $C_1=N_0 dC(d,\frac{2}{3})$ where $C(d, \frac{2}{3})$ is the constant as defined in the Lemma \ref{diameterzero} and $N_0 -1$ is the number of open disks $D_{\frac{1}{3}}(z),  \frac{2}{3}< |z| < 1 $ whose union covers $\{z: \frac{2}{3} \leq |z|\leq 1\}$. Corresponding to each $c_i$, choose a repelling periodic point $w_i$ sufficiently close to $c_i$ such that the cycle of $w_i$ does not contain $w$, the accumulation point of $J$ and such that the set $\Omega = \widehat{\mathbb{C}} \setminus \{B \cup_{i=1}^{k}\{f^n(w_i): n \geq 1 \}\}$ satisfies the following conditions.
	
\begin{enumerate}
\item $d_\Omega(c_i,f^n(c_i)) \geq C_1$ for all $n  \geq 1$ and $1 \leq i \leq k$,
\item $d_\Omega(f(c_i),f(c_j)) \geq C_1 $ whenever $f(c_i) \neq f(c_j)$,
where $d_\Omega$ is the hyperbolic distance of $\Omega$.
\end{enumerate} 
Such a choice of $\Omega$ is possible as the set of all repelling periodic points is dense in the Julia set.  
Since $\Omega$ does not contain any asymptotic value of $f$ (by Lemma~\ref{critically infinite}), for every simply connected domain $D \subset \Omega$ and every component $D'$ of  $f^{-1}(D)$, 
$f: D' \to D$ is a proper map. Considering a conformal conjugate of $f$, if necessary we assume that $0, \infty \notin \Omega$. Note that $\Omega$ is a hyperbolic domain containing $\mathcal{J}(f) \setminus  \{z: z ~\mbox{is a parabolic periodic 
	point of }~f\}$. In particular, $ w\in \Omega$.
\par
Let $w$ is neither a parabolic periodic point nor an accumulation point of any critical point. Then there exists   $D_r = \{z:|z-w|<r\}$ and  $D_{2r} = \{z:|z-w|<2r\}$   such that  $diam_{\Omega}D_{2r} <  C_1$ and $D_{2r}$ does not intersect the $\omega$-limit set of any recurrent critical point or any parabolic periodic point.  Clearly, $diam_\Omega(D_{r}) < C_1$.

We claim that, for all $n$ and  for every connected component $ V'_n $ of $f^{-n}(D_r)$,	
\begin{align}\label{claim}	V'_n ~\mbox{ is simply connected}, 
deg(f^n:V'_n \rightarrow D_r) \leq d, ~\mbox{ and}~\\ diam_\Omega V'_n < C_1.
\end{align}
To prove it, we proceed by induction on $n$.
\par	
Let $V'_1$ and $V_1$ be the components of $f^{-1}(D_r)$ and $f^{-1}(D_{2r})$ respectively such that $V'_1 \subset V_1$. By the choice of $\Omega$, $D_{2r}$ (and hence $D_r$) contains at most one critical value. It follows from Lemma \ref{simplyconnectedlema} that $V'_1$ and $V_1$ are simply connected. Now  $f : V'_1 \rightarrow D_{r}$ is a proper map of degree at most $d$  (It is in fact $1$ if $D_r$ does not contain any critical value). Choose $z_1,~z_2,\dots,~z_{N_0 -1} \in  A(w;\frac{2r}{3}, r)=\{z: \frac{2r}{3} < |z-w| < r\}$ such that $  \overline{A(w;\frac{2r}{3}, r)} \subset \cup_{i=1}^{N_0 -1} D_{\frac{r}{3}}(z_i)$. 
	Since $D_{\frac{r}{2}}(z_i) \subset D_{2r}$,  $D_{\frac{r}{2}}(z_i)$ contains at most one critical value. It follows from Lemma \ref{simplyconnectedlema} that each component $\tilde{ V}$ of $f^{-1}(D_\frac{r}{2}(z_i))$  is simply connected. By the choice of $\Omega$ , $ f:\tilde{ V} \rightarrow D_\frac{r}{2}(z_i) $ is a proper map with degree at most $d$. Since $D_{\frac{2}{3}.\frac{r}{2}}(z_i) \subset D_\frac{r}{2}(z_i)$, Lemma \ref{diameterzero} gives that the diameter of the   component $\tilde{\tilde{V}}$ of $f^{-1}(D_{\frac{r}{3}}(z_i))$, with respect to the hyperbolic distance of $\tilde{ V}$, is at most  $ C(d, \frac{2}{3})$. Since $\tilde{V} \subset \Omega$, $diam_{\Omega} \tilde{\tilde{V}} < C(d,\frac{2}{3})$. Again using Lemma \ref{diameterzero} for $ D_\frac{2r}{3}(w) \subset D_{r} $ and arguing similarly, we have that $diam_\Omega \tilde{ U} < C(d,\frac{2}{3})$  for each component $\tilde{ U}$ of $f^{-1}(D_\frac{2r}{3}(w))$. Note that $D_{r} \subset D_\frac{2r}{3}(w) \cup_{i=1}^{N_0-1} D_\frac{r}{3}(z_i)  $. Since $f:V'_1 \rightarrow D_{r}$ is a proper map with degree at most $d$, the pre-image of each of the above mentioned disks has at most $d$ components. Since the diameter of each such pre-image component with respect to $d_\Omega$ is less than  $C(d,\frac{2}{3})$, $diam_\Omega V'_1 < d N_0 C(d, \frac{2}{3})=C_1$. Thus the claim is proved for $n=1$.
	
\par 
Assume that the claim is true for $n =m$.  This implies that for every connected component $V$ of $f^{-m}(D_{r})$ is simply connected, $deg(f^m:V  \rightarrow D_{r}) \leq d$ and $diam_\Omega V  < C_1$. We shall be done by proving these  for $n=m+1$.
Let $V'_{m+1}$ be a component of $f^{-(m+1)}(D_r)$. If $f(V_{m+1}')=V_{m}'$ then $V_{m}'$ is a component of $f^{-m}(D_r)$, which, by the choice of $\Omega$ and the induction assumption, contains at most one critical value.  It now follows from Lemma \ref{simplyconnectedlema} that $V'_{m+1}$ is simply connected. By the choice of $\Omega$, each critical point $c \in NC$ appears at most once in $U_i =f^i(V'_{m+1})$  for $i=0,~1, \dots, m$. (If not then $c \in U_i \cap U_{i+k}$ and $U_{i+k}$ contains $c$ as well as $f^{k}(c)$, which gives that $diam_{\Omega} U_{i+k} \geq C_1$. But this is contrary to the induction assumption).	 Thus, \begin{equation}
	\label{degree}
	deg(f^{m+1}:V'_{m+1} \rightarrow D_{r}) \leq d.
	\end{equation}
	
Now consider the same open cover $\{D_{\frac{r}{3}}(z_i): i=1,2,3,\cdots, N_{0}-1\} \cup D_{\frac{2r}{3}}$ of $D_r$. Using ~(\ref{degree}) and repeating the arguments as earlier, we get that
	\begin{equation}
	\label{tendingto0}
	diam_\Omega V'_{m+1} < C_1.
	\end{equation}  
	This proves the claim for all $n$.
	\par
	Suppose on the contrary that $J$ is not singleton. Then its spherical diameter  $|J|_s$ is positive. Let $z \in J$. Then $z \neq 0, \infty$ as $J \subset \Omega \subset \widehat{\mathbb{C}}$ by our earlier assumption. Choose a sufficiently small $C>0$ such that $2 C' (e^{2C} -1) \frac{1}{|z|}<|J|_s $ where $C'$ is as mentioned in Lemma~\ref{sphericaldiameter}.  Also, choose $0 < \rho <1$ such that $C(d, \rho) <C$. This is possible as $\lim_{r\to 0}  {C(d,r)}=0$ by Lemma~\ref{diameterzero}. Note that $D_{\rho r} \subset D_r$. Since $f^{n_k}(J) \to w$, there is an $n$ such that $f^{n}(J) \subset D_{\rho r}$. Let  $W'$ and $W$ be the components of $f^{-n}(D_{\rho r})$ and $f^{-n}(D_r)$  respectively, each containing $J$. As already proved, $W'$ and $W$ are simply connected and $\deg(f^{n}: W' \to D_{\rho r}) \leq d$. Therefore, $diam_{W}W' \leq C(d, \rho) <C$. By Lemma~\ref{sphericaldiameter}, $|W'|_s \leq 2 C' (e^{2C} -1) \frac{1}{|z|}$, which is less than $|J|_s$. But $J$ is properly contained in $W'$ as $f^{n}(J)$ is a compact subset of $D_{\rho r}$ and $f^{n}: W' \to D_{\rho r}$ is proper. However, this is not possible as $|J|_s>0$. Therefore, $J$ is singleton, and the proof  completes.  
\end{proof}
Here is a useful remark.

\begin{rem}  Under the hypotheses of the above theorem, for every $\epsilon>0$ and every non-degenerate Julia component  $J$, there is $n_0$ such that $J \subset \{f^{-n}(B)\}_\epsilon$ for all $n > n_0$.  The $\epsilon$-neighborhood of a set $A$, denoted by $A_{\epsilon}$ is defined as  $\cup_{a \in A} D_{\epsilon}(a)$.   Suppose on the contrary, for a  non-degenerate Julia component $J$ and  an $\epsilon>0$, there is a sequence $z_k  \in J  $  and an increasing sequence $n_k$ such that $z_k \notin \{f^{-n_k} (B)\}_\epsilon$. Then consider a limit point $z^*$ of $\{z_k\}_{k >0}$. Since $f^{-n}(B) \supset f^{-n+1}(B)$ for all $n$ by the construction of $B$,  $z_k \notin  \{f^{-j} (B)\}_\epsilon$ for any $ j =1,2,3,\cdots n_k$. It is clear that $z^* \notin  \{f^{-n_k} (B)\}_\epsilon$ for any $k$. In other words $z^*$ is not a limit point of $\cup_{k>0}f^{-n_k}(B)$. Consequently, there is a disk $D$ around $z^*$ such that $\{f^{n_k}\}_{k>0}$  omits all the points of $B$ on $D$. However this is not possible as $z^*$ is in the Julia set.

	\label{shrinking}
\end{rem}
Let $|A|$ denote the   Euclidean diameter of the subset $A$ of $\widehat{\mathbb{C}}$.

\begin{proof}[Proof of Theorem~\ref{rotationaldomainboundary}]
	Let $V$ be a $p$-periodic rotation  domain of $f$.	If $N \subset V$ is an open set then there exists an $\epsilon >0$ such that $ |N_{-n}| \geq  \epsilon$ for all $n$, where $N_{-n}$ is the component of $f^{-n}(N)$ contained in a domain belonging to the cycle containing $V$. To see this,  let $\gamma_1$ and $\gamma_2$ be two $f^p-$ invariant Jordan curves  intersecting $N$. Then $\epsilon= \min_{1 \leq i \leq p} d_{H}(f^i(\gamma_1), f^i(\gamma_2)) >0$ where $d_{H}$ denotes the Hausdorff distance.   Since $N$ intersects $\gamma_1$ and $\gamma_2$, $N_{-n}$ must intersect  $f^{-n} (\gamma_1)$ and $f^{-n}(\gamma_2)$. But  $f^{-n}(\gamma_j)= f^{p-n}(\gamma_j)$ for $j=1,2$. Since $d_{H} (f^i(\gamma_1), f^i(\gamma_2)) \geq \epsilon$ for $1 \leq i \leq p$, 
	
\begin{equation}
|N_{-n}| \geq \epsilon~\mbox{ for all}~n.
\label{nonshrinking}\end{equation}
	
	 Note that the number of critical values not contained in the Fatou component containing the bov is finite. So $f$ has at most finitely many parabolic  periodic cycles.
 \par 
For a Fatou component $V_k$ in the periodic cycle of $V$, let  $z_0 \in \partial V_k$ for some $ 0 \leq k \leq p-1$ such that it is not contained in the $\omega-$limit set of the recurrent critical points. Since the number of parabolic cycles is finite, without loss of generality we asume that $z_0$ is not a parabolic periodic point. Let $D_{2r}$ be a ball centered at $z_0$ contained in $\mathbb{C} \setminus B$, where $B$ is a forward invariant compact set containing all the critical values belonging to the Fatou set ( See the proof of Theorem~\ref{totallydisconnectedness}). Further, let $D_{2r}$ be such that it
 does not intersect the $\omega-$limit set of any recurrent critical point  or any parabolic periodic  point. If $D_{-2n}$ is the component  of  $f^{-np}(D_{2r}) $ intersecting the boundary of $V_k$ for some $k$ and  $D_{-n}$ is the component of  $f^{-np}(D_{r}) $ contained in $D_{-2n}$, then it follows from the proof of Theorem~\ref{totallydisconnectedness} that  there is a $C>0$ with $diam_{D_{-2n}}(D_{-n}) < C$ for all  $n$. Let $z_n \in D_{-n}$ such that $f^{np}(z_n)=z_0$. Since $D_{-2n} \subset f^{-np}(\widehat{\mathbb{C}} \setminus B)$ and by Remark~\ref{shrinking}, $d(z_n, \partial  f^{-np}(\widehat{\mathbb{C}} \setminus B) ) \to 0 $ as $n \to \infty$, it follows from Lemma~\ref{sphericaldiameter} that $|D_{-n}|_s \to 0$ as $n \to \infty$.  
However, this is a contradiction to (\ref{nonshrinking}). 
Thus the boundary of the rotation  domains are contained in the closure of the forward orbit of the recurrent critical points.
\end{proof}
\begin{rem}
	In Theorem~\ref{rotationaldomainboundary}, it is enough to assume that 	the number of critical points corresponding to each critical value belonging to the Julia set is finite. This is evident from the proof.
\end{rem}

\section{ Example}
Now we give a class of functions for which the bov is the only limit point of their critical values.
\par
Let $\alpha \neq 0, \beta \in \mathbb{C}$ and  $P$ be a non-constant polynomial. Since $P(z)+ e^z \to \infty $ as $z \to \infty$, it follows from  Theorem 2.2,~\cite{bov-1}  that $\beta$ is the bov of $f_{\alpha, \beta}(z)=\frac{\alpha}{P(z)+e^z}+\beta$.  Now $f'_{\alpha, \beta} (z)=-\alpha\frac{P'(z)+e^z}{(P(z)+e^z)^2}$. Since the bov is a limit point of the critical values, $f_{\alpha, \beta}$ has infinitely many critical values and hence infinitely many critical points. Let $z_n$ be the set of all critical points. Then  $P'(z_n)+e^{z_n}=0$. Further, every critical point which is a pole must satisfy $P(z)+e^{z}=0$. Therefore   $P(z_n)-P'(z_n)=0$ for each multiple pole $z_n$. Since $P(z)-P'(z)=0$ has at most  $d$ distinct roots,  $f_{\alpha, \beta}(z_n) = \infty$ for at most finitely many values of $n$. Note that $z_n \rightarrow \infty$ as $n \rightarrow \infty$ and  $P(z)-P'(z) \to \infty$ as $z \to \infty$.  This gives that $P(z_n)-P'(z_n) \rightarrow \infty$ as $z_n \rightarrow \infty$. Thus $f_{\alpha, \beta}(z_n)=\frac{\alpha}{P(z_n)+e^{z_n}}+ \beta =\frac{\alpha}{P(z_n)-P'(z_n)}+ \beta  \rightarrow \beta$ as $n \rightarrow \infty$. Therefore $\beta$  is the only limit point of the critical values of $f_{\alpha, \beta}$.
Further, $f_{\alpha, \beta}$ has invariant attracting domain, invariant parabolic domain and invariant Siegel disks for suitable choices of $\alpha$, $\beta$ and $P$~\cite{bov-1}.

\end{document}